\newtheorem{thm}{Theorem}
\newtheorem{prop}{Proposition}
\newtheorem{cor}{Corollary}
\newcommand{\Hom}{\textrm{Hom}}
\newcommand{\Stab}{\textrm{Stab}}
\newtheorem{defn}{Definition}
\newtheorem{qn}{Question}
\newtheorem{lem}{Lemma}
\newcommand{\Aut}{\textrm{Aut}}
\newcommand{\Out}{\textrm{Out}}
\newcommand{\Mod}{\textrm{Mod}}
\newcommand{\Ac}{\mathcal{A}}
\newcommand{\Lf}{\mathfrak{l}}
\newcommand{\II}{\mathcal{I}}
\newcommand{\LL}{\mathcal{L}}
\newcommand{\PP}{\mathbb{P}}
\newcommand{\OO}{\mathcal{O}}
\newcommand{\ZZ}{\mathbb{Z}}
\newcommand{\QQ}{\mathbb{Q}}
\newcommand{\CC}{\mathbb{C}}
\newcommand{\DD}{\mathbb{D}}
\title{A $\pi_1$ obstruction to having finite index monodromy and an unusual subgroup of infinite index in $\Mod(\Sigma_g)$}
\author{Ishan Banerjee }
\date{April 2023}
\begin{document}

\maketitle
\begin{abstract}
    Let $X$ be an algebraic surface with $\LL$ an ample line bundle on $X$.
    Let $\Gamma(X, \LL)$ be the \emph{geometric monodromy} group a
    associated to family of nonsingular curves  in $X$ that are zero loci of sections of  $\LL$. We provide obstructions to $\Gamma(X, \LL)$ being finite index in the mapping class group. We also show that for any $k \ge 0$, the image of  monodromy  is  finite index in appropriate subgroups of the quotient of the mapping class group by the $k$th term of the Johnson filtration assuming that $\LL$ is sufficiently ample. This enables us to construct several subgroups of the mapping class group with unusual properties, in some cases providing the first examples of subgroups with those properties.

\end{abstract}
\section{Introduction}
Let $X$ be a smooth complex projective surface. Let $\LL$ be an ample line bundle on $X$. Let $H^0(X, \LL)$ be the space of global sections of $\LL$. Let $\PP H^0(X, \LL)$ be its projective space. Since two sections of $\LL$ are scalar multiples of each other if and only if their zero loci are the same, we may identify $\PP H^0(X, \LL)$ with the space of complex curves in $X$ which arise as zeroes of sections of $\LL$.

Let $$ \bar E (X, \LL) = \{ (p,f) \in X \times \PP H^0(X, \LL) | f(p) =0 \}.$$ Let $\pi: \bar E (X, \LL) \to \PP H^0(X, \LL)$  be the projection map. The fibres of $\pi$ are curves in $X$.

Let $$U(X, \LL) = \{   f \in \PP H^0(X, \LL) | \not \exists p \in X,  f(p) =0 \textrm{ and } df(p) =0\}.$$ The space $U(X, \LL)$ parametrizes \emph{nonsingular} curves in $X$
that are zero loci of sections of $\LL$. It is an open subset of $\PP H^0(X, \LL).$
Let $$E(X, \LL) = \pi^{-1}(U(\LL)).$$ The map $\pi: \bar E (X, \LL) \to \PP H^0(X, \LL)$ restricts to a map $E(X, \LL) \to U(X, \LL)$. It is a proper submersion, so by Ehresmann's fibration theorem it is a $C^{\infty}$ fibre bundle. Let $f\in U(X, \LL)$, and let $C \subseteq X$ be the zero locus of $f$. Associated to this fiber bundle is a monodromy representation $ \rho: \pi_1(U(X, \LL),f) \to \Mod (C)$, where $\Mod(C)$ is the mapping class group of $C$. We will denote the image of $\rho$ by $\Gamma(X, \LL)$.

In the case when $X$ is a toric surface the group $\Gamma(X, \LL)$ is completely understood by work of Salter and Calderon- Salter. This is done in the papers \cite{S} and \cite{CaS}. However a toric surface is incredibly special and part of the proof relies on constructing families of vanishing cycles in an extremely combinatorial fashion (called A- cycles and B-cycles in those paper) that does not seem likely to generalise to other kids of surfaces. 

In this paper we will prove some results about the group $\Gamma(X,\LL)$ under various \emph{topological} hypotheses on $X$, specifically hypotheses on $\pi_1(X)$. Our results will be far weaker than those in \cite{S} and \cite{CaS}. However the class of surfaces that we deal with is far larger.

Our main theorems are as follows:

\begin{thm}\label{maincontain}
    Let $i: C \to X$ be the inclusion map. Let $i_*: \pi_1 (C) \to \pi_1(X)$ denote the induced map on $\pi_1$.
    The group $\Gamma (X, \LL)$ is contained in the group $\Ac[i_*]$ where $\Ac[i_*] \subseteq \Mod(C)$ is the subgroup generated by Dehn twists about nonseparating curves in $\ker i_*$. If $\pi_1 X$ is torsion-free and infinite, or if $H^1(X, \QQ) \neq 0$, $\Ac[i_*]$ is of infinite index in $\Mod(C)$.
\end{thm}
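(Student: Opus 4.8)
The plan is to split the statement into two parts: the containment $\Gamma(X, \LL) \subseteq \Ac[i_*]$, and the infinite-index claim.

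For the containment, I would argue that every element of $\Gamma(X,\LL)$ is a product of Dehn twists about nonseparating curves lying in $\ker i_*$. The key geometric input is the Lefschetz pencil / Lefschetz fibration picture: after blowing up the base locus of a pencil inside $\PP H^0(X,\LL)$ (or passing to a Lefschetz pencil obtained by restricting to a generic line), the monodromy around each critical value is a Dehn twist about a vanishing cycle. So I would first reduce to showing that $\pi_1(U(X,\LL))$ is generated by loops around the discriminant, each of whose monodromy is a single Dehn twist about a vanishing cycle. Then I must check two things about such a vanishing cycle $\delta \subseteq C$: (i) it is nonseparating, and (ii) it lies in $\ker i_*$, i.e. $i_*[\delta] = 0$ in $\pi_1(X)$ (in fact it bounds in $X$). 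Point (ii) is essentially by construction: a vanishing cycle bounds an embedded disk (the Lefschetz thimble) in the total space, and one pushes this disk into $X$; more precisely the vanishing cycle is the boundary of a vanishing Lefschetz cell which maps into $X$, so $[\delta]$ dies in $\pi_1(X)$. Point (i), nonseparating-ness of vanishing cycles, is a standard fact for Lefschetz pencils of sufficiently positive line bundles (the generic curve has positive genus and the pencil is irreducible / the vanishing cycles are homologically nontrivial), though one should be slightly careful: I would invoke that the total space is connected and the monodromy acts transitively / the pencil is not a product. Actually, for the containment statement as phrased one wants $\Gamma$ generated by twists about nonseparating curves in $\ker i_*$; if some vanishing cycle were separating I would need to handle it, so the cleanest route is to cite irreducibility of the universal family and positivity of $\LL$ to guarantee all vanishing cycles are nonseparating, or to absorb separating vanishing cycles into the subgroup generated by nonseparating ones.

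For the infinite-index claim I would use a homological obstruction. There is a natural action of $\Mod(C)$ on $H_1(C;\QQ)$ with image $\mathrm{Sp}(2g,\QQ)$. The subgroup $\Ac[i_*]$ is generated by transvections along classes in $V := \ker(i_*: H_1(C;\QQ) \to H_1(X;\QQ))$ (the homology-level kernel), hence its image in $\mathrm{Sp}(2g,\QQ)$ preserves $V$ — more precisely it fixes the quotient $H_1(C;\QQ)/V$ pointwise and acts trivially on $V^{\perp}/(V^{\perp}\cap V)$ or some such. If $H^1(X;\QQ) \neq 0$ then $V$ is a proper nonzero subspace of $H_1(C;\QQ)$ (nonzero because $i_*$ is surjective on $H_1$ — by the Lefschetz hyperplane theorem, $C \hookrightarrow X$ is surjective on $\pi_1$ for sufficiently ample $\LL$, so $H_1(C) \to H_1(X)$ is surjective, but $g(C)$ is large so the kernel $V$ is nonzero; and $V$ is proper since $H_1(X;\QQ)\neq 0$). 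A subgroup of $\mathrm{Sp}(2g,\QQ)$ preserving a proper nonzero subspace is not Zariski-dense, hence has infinite index in $\mathrm{Sp}(2g,\ZZ)$, hence $\Ac[i_*]$ has infinite index in $\Mod(C)$.

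The torsion-free infinite $\pi_1$ case needs a different, more $\pi_1$-theoretic argument since there one may have $H^1(X;\QQ) = 0$ while $\pi_1(X)$ is still large (e.g. a perfect infinite fundamental group). Here the idea is that $\Ac[i_*]$, being generated by twists about curves that die in $\pi_1(X)$, must act on $\pi_1(C)$ in a way compatible with the surjection $i_*: \pi_1(C) \twoheadrightarrow \pi_1(X)$: every generator, hence every element, of $\Ac[i_*]$ descends to an \emph{inner} automorphism (in fact the identity outer automorphism) of $\pi_1(X)$, because a Dehn twist about $\delta$ changes $\pi_1(C)$ only along $\delta$ and $i_*\delta = 1$. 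So $\Ac[i_*]$ lies in the kernel of the natural map $\Mod(C) \to \Out(\pi_1(X))$ (or into a coset thereof). The main obstacle is then to show this kernel has infinite index, i.e. that the composite $\Mod(C) \to \Out(\pi_1(X))$ has infinite image when $\pi_1(X)$ is torsion-free and infinite; for this I would want to exhibit infinitely many mapping classes inducing distinct outer automorphisms of $\pi_1(X)$, using that $i_*: \pi_1(C) \to \pi_1(X)$ is surjective with sufficiently complicated kernel and that $\pi_1(X)$ being torsion-free infinite rules out the degenerate cases. This last step — producing enough distinct outer automorphisms, i.e. the actual index lower bound — is where I expect the real work to lie, and it is presumably where the paper's hypotheses (torsion-free, infinite) are doing their job; I anticipate an argument via the action on, say, $H_1$ of a characteristic finite cover of $X$, or via growth of the number of realizable automorphisms, but the details are the crux.
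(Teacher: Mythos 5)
Your containment argument and your $H^1(X,\QQ)\neq 0$ argument are both essentially sound and close in spirit to the paper's. For containment, the paper goes slightly differently: rather than pushing a Lefschetz thimble into $X$ to see that vanishing cycles die in $\pi_1(X)$, it observes that parallel transport makes $i$ homotopic to $i\circ\phi$ for every $\phi\in\Gamma(X,\LL)$, so $\Gamma(X,\LL)$ lies in the stabilizer $\Stab[i_*]$ of the conjugacy class of the surjection $i_*\colon\pi_1(C)\to\pi_1(X)$, and then proves separately (Proposition \ref{inker}) that a Dehn twist about a nonseparating curve $\alpha$ lies in $\Stab[i_*]$ iff $\alpha\in\ker i_*$. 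Your thimble argument is a legitimate alternative; your worry about separating vanishing cycles is handled in the paper by citing that smooth points of the discriminant correspond to irreducible nodal curves, so all the generating vanishing cycles are nonseparating. For the $H^1\neq 0$ case, your ``image in $Sp$ preserves a proper nonzero subspace, hence is not Zariski dense, hence infinite index'' is the same obstruction the paper uses (it phrases it as fixing the nonzero vector $i^*\alpha$).

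The genuine gap is the torsion-free infinite $\pi_1(X)$ case, which you explicitly leave open (``the details are the crux''). Moreover the framework you propose is off: there is no natural homomorphism $\Mod(C)\to\Out(\pi_1(X))$, and the relevant object is not the set of outer automorphisms of $\pi_1(X)$ but the $\Mod(C)$-orbit of the conjugacy class of the surjection $i_*$ inside $\Hom(\pi_1(C),\pi_1(X))/\mathrm{conj}$; to get infinite index of $\Stab[i_*]$ you need this orbit to be infinite. The paper's argument (Proposition \ref{stabinf}) is elementary and avoids covers entirely: using connectivity of the graph of essential simple closed curves with edges given by geometric intersection number one, it produces curves $\alpha,\beta$ with $\hat i(\alpha,\beta)=1$, $\alpha\in\ker i_*$, $\beta\notin\ker i_*$. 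Then $T_\beta^k(\alpha)$ is (up to conjugacy) $\alpha\beta^k$, whose image is $i_*(\beta)^k$, and torsion-freeness of $\pi_1(X)$ guarantees $i_*(\beta)^k\neq 1$ for all $k\neq 0$, whereas $i_*(\alpha)=1$. Hence no nonzero power of $T_\beta$ stabilizes $[i_*]$, so the cosets $T_\beta^k\Stab[i_*]$ are pairwise distinct and $\Stab[i_*]\supseteq\Ac[i_*]$ has infinite index. This single-Dehn-twist argument is the missing idea; without it (or a substitute) your proof of this case is incomplete.
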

The reader may be concerned that it may be the case that $\pi_1 X$ is infinite and the map  $i_*$ is $0$, in which case $\Ac[i_*]$ would be all of $\Mod(C)$.
We note here that by the Lefschetz hyperplane theorem $i_*$ is surjective, so this situation can not arise.

\begin{thm}\label{mainsurject}
    Let $k> 0, r \gg k$.  Let $C$ be the zero locus of a section of $\LL^{\otimes r}$ in $X$. Let $i : C \to X$ be the inclusion. Let $V = i^*(H^1(X)) \subseteq H^1(C)$. Let $\Ac[V] \subseteq \Mod(C)$ be the subgroup generated by Dehn twists about nonseparting simple closed curves in $C$ whose associated homology class in $H^1(C)$ is in $V^{\perp}$. Let  $J_k \subseteq \Mod(C)$ denote the $k$th term of the Johnson filtration. Then  $$\Gamma(X, \LL^{\otimes r})/ J_k \cap \Gamma(X, \LL^{\otimes r})$$ is a finite index subgroup of $\Ac[V] / J_k \cap \Ac[V].$

\end{thm}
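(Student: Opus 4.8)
The plan is to prove the equivalent statement that the image of $\Gamma := \Gamma(X,\LL^{\otimes r})$ in $\Mod(C)/J_k$ is of finite index in the image of $\Ac[V]$. First I would record the algebraic shape of the target. Since $[C]=r\,c_1(\LL)$ is ample, Hard Lefschetz makes the skew pairing $(a,b)\mapsto \int_X a\cup b\cup c_1(\LL)$ on $H^1(X;\QQ)$ nondegenerate; by the projection formula this is a multiple of the intersection form restricted to $V = i^*H^1(X)\subseteq H^1(C)$, so $V$ is a symplectic subspace and $H^1(C;\QQ) = V\oplus V^\perp$ is a symplectic decomposition. Hence, writing $W$ for the saturation of $V^\perp$ in $H_1(C)$, the image of $\Ac[V]$ in $\mathrm{Sp}(H_1(C)) = \Mod(C)/J_1$ is, up to finite index, the subgroup $\mathrm{Sp}(W)$, because a symplectic group over $\ZZ$ is generated by the transvections about its primitive vectors. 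Theorem~\ref{maincontain} already gives $\Gamma\subseteq\Ac[i_*]\subseteq\Ac[V]$, so only the finite-index (surjectivity) direction remains.

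Next I would extract a large supply of elements of $\Gamma$ from a Lefschetz pencil $\mathcal{P}\subseteq |\LL^{\otimes r}|$: the base of $\mathcal{P}$ minus its critical values maps to $U(X,\LL^{\otimes r})$, so $\Gamma$ contains the Dehn twists $T_{\delta_1},\dots,T_{\delta_N}$ about the vanishing cycles of $\mathcal{P}$. For $r$ large the generic singular member of $|\LL^{\otimes r}|$ is an irreducible one-nodal curve, so each $\delta_j$ is nonseparating; irreducibility of the dual variety of $(X,\LL^{\otimes r})$ puts all the $\delta_j$ in a single $\Gamma$-orbit; and the classes $[\delta_j]$ span the variable part $V^\perp\otimes\QQ$ of $H_1(C;\QQ)$. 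With this input the case $k=1$ follows from the classical fact that a subgroup of a symplectic group over $\ZZ$ generated by transvections about a single orbit of primitive vectors that span, and whose intersection graph is connected enough — both arranged by taking $r$ large — is of finite index; so $\Gamma\to\mathrm{Sp}(W)$ has finite-index image.

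For $k\ge 2$ I would induct on $k$ using the central extensions $1\to J_{m-1}/J_m\to \Mod(C)/J_m\to\Mod(C)/J_{m-1}\to 1$ together with the Johnson homomorphisms $J_{m-1}/J_m\hookrightarrow \mathcal{L}_m$ into the degree-$m$ part of the free Lie algebra on $H_1(C)$. Writing $\bar\Gamma_m$, $\bar A_m$ for the images of $\Gamma$, $\Ac[V]$ in $\Mod(C)/J_m$, the surjection $\bar\Gamma_k\to\bar\Gamma_{k-1}$ reduces the inductive step to showing that $\bar\Gamma_k\cap(J_{k-1}/J_k)$ has finite index in $\bar A_k\cap(J_{k-1}/J_k)$ inside the finitely generated abelian group $J_{k-1}/J_k$. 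I would hit these elements in two ways: commutators $[g_1,g_2]$ of elements of $\Gamma$ lying in lower Johnson terms already controlled by the inductive hypothesis (their Johnson images being the corresponding Lie brackets), combined with Hain's theorem that the Malcev Lie algebra of the Torelli group is generated in degree one (so for $g(C)\ge 3$ every rational class in $J_{k-1}/J_k$ is a combination of iterated brackets of the $J_1/J_2$-data, which the $k-1$ case controls); and, to anchor the induction at $k=2$, explicit bounding-pair maps $T_\delta T_{\delta'}^{-1}$ produced from pairs of disjoint homologous vanishing cycles of $\mathcal{P}$. This is where the hypothesis $r\gg k$ is used: one needs to know that a Lefschetz pencil in $|\LL^{\otimes r}|$ contains vanishing cycles realizing all the configurations (disjoint homologous pairs, and the bounded subsurfaces carrying the depth-$(k-1)$ Johnson data) needed for their twist-products to span the relevant subspaces of $J_{k-1}/J_k$, which is arranged by making the nodal locus of $|\LL^{\otimes r}|$ correspondingly rich and can be checked by degenerating to local models at the nodes together with a plumbing argument (or by comparison with configurations already present in a fixed very ample system and stabilizing).

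The step I expect to be the main obstacle is precisely this last one: turning ``$r$ sufficiently large'' into a guarantee that some Lefschetz pencil's vanishing cycles realize the prescribed topological configurations, and then matching the subspace of $J_{k-1}/J_k$ spanned by the associated products of twists with the full $\Ac[V]$-part. The $\mathrm{Sp}$-equivariance of the Johnson homomorphisms and the transitivity of $\Gamma$ on vanishing cycles will handle most of the bookkeeping, but the existence of prescribed vanishing-cycle configurations inside a highly ample linear system is the genuinely geometric and delicate input.
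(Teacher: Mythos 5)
Your overall skeleton (containment from Theorem \ref{maincontain}, finite-index image in $Sp(V^{\perp})$ via vanishing-lattice theory, then an analysis of the graded pieces $J_{k-1}/J_k$) matches the paper's, but the two steps that carry the real weight diverge from the paper and both are problematic. First, your inductive step for $k\ge 2$ leans on Hain's theorem that the Malcev Lie algebra of the Torelli group is generated in degree one. That theorem concerns the lower central series of $\mathcal{I}=J_1$, not the Johnson filtration: the natural map from the $k$-th lower-central-series graded piece of $\mathcal{I}$ to $J_{k}/J_{k+1}$ is not known to be rationally surjective in general, so ``every rational class in $J_{k-1}/J_k$ is a combination of iterated brackets of $J_1/J_2$-data'' does not follow from Hain. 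You would moreover need the corresponding generation statement for $\Ac[V]\cap J_\bullet$ rather than for the full Torelli group. The paper avoids this entirely: it uses the Church--Putman theorem \cite{CP} that $J_k$ is generated by the mapping-class-group orbit of $J_k$ of a subsurface of bounded genus, together with Proposition \ref{genbyC0}, which identifies the image of $\Ac[V]$ in $\Out(\Lf_k(C))$ with the image of $\Mod(C_0)$ for a subsurface $C_0$ realizing $V^{\perp}$; no generation-in-degree-one input is required.

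Second, the step you yourself flag as the main obstacle --- producing, for $r\gg k$, vanishing-cycle configurations whose twist-products span the relevant subspaces of $J_{k-1}/J_k$ --- is precisely where the paper's content lies, and it is not settled by a degeneration-and-plumbing sketch. The paper's route is different in kind: by Serre vanishing one embeds a full versal deformation space of a single, arbitrarily complicated plane curve singularity into $\PP H^0(X,\LL^{\otimes r})$ (Propositions \ref{embedding} through \ref{versmon}), so that $\Gamma(X,\LL^{\otimes r})$ contains the versal-deformation monodromy group of the Milnor fiber $C'\subseteq C$; by Portilla-Cuadrado--Salter \cite{CuS} that group is a framed mapping class group of $C'$, hence contains $J_k(C')$ for $k\ge 2$ and the kernel of the Chillingworth map for $k=1$. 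This puts an entire (framed) mapping class group of a large subsurface inside $\Gamma$, not merely individual twists and bounding pairs, and it is exactly what makes the Church--Putman step applicable after sweeping by the $Sp(V^{\perp})$-action. Without an input of comparable strength your induction has neither a base at $k=2$ nor an engine for the general step, so as written the proposal has a genuine gap at both of its crucial points.
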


There is a tension between the results of Theorems \ref{maincontain} and \ref{mainsurject}. For instance, suppose $\pi_1 X$ is infinite and torsion free and $H^1(X, \QQ) = 0$. In these cases the group $\Gamma(X, \LL^{\otimes r})$ is infinite index in $\Mod(C)$ but for $r \gg k$ , the group is finite index in $\Mod(C)/ J_k$. Thus the fact that $\Gamma(X, \LL^{\otimes r})$ is infinite index must involve it not containing lots of elements deep in the Johnson filtration.

In order to establish Theorem \ref{mainsurject} we establish some of the properties of the higher Johnson-Morita homomorphisms when restricted to $\Ac[V]$. This is covered in Section 5 . We believe this to be of independent interest.

As a result of the above Theorems and also Theorem \ref{thmtechsurj}, we construct subgroups of $\Mod(C)$
with unusual properties.

\begin{cor}\label{cor1}
    Let $k >0$. Let $X$ be an smooth projective surface such that $\pi_1 X$ is a torsion free lattice in $Sp_4(\ZZ)$. Let $\LL$ be an ample line bundle on $X$. Let $r\gg k$. Let $C$ be the zero locus of a section of $\LL^{\otimes r}$. Then, 
    \begin{enumerate}
        \item $\Gamma(X, \LL^{\otimes r})$ is infinite index in $\Mod(C)$.
        \item $\Gamma (X, \LL^{\otimes r}) / \Gamma (X, \LL^{\otimes r}) J_k$ is finite index in $ \Mod(C) / J_k$.
    \end{enumerate}
\end{cor}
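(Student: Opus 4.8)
The plan is to read off both assertions from Theorems~\ref{maincontain} and~\ref{mainsurject}, applied to the ample bundle $\LL^{\otimes r}$, after recording what property~(T) does to $\pi_1 X$. Write $\Lambda \cong \pi_1 X$. Being a lattice in the non-compact group $Sp_4(\RR)$, $\Lambda$ is infinite; being a lattice in $Sp_4(\RR)$, a simple Lie group of real rank $2$, it has Kazhdan's property~(T), so $\Hom(\Lambda,\QQ) = 0$. Since $H^1(X;\QQ) \cong \Hom(\pi_1 X,\QQ)$ this gives $b_1(X) = 0$, and then $H^1(X;\ZZ) = 0$ as well (integral $H^1$ of a space with finitely generated homology is free abelian). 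One should also recall at the outset that such surfaces $X$ exist --- for instance, take a smooth ample surface section of a smooth projective Siegel modular threefold attached to a torsion-free cocompact lattice and compute $\pi_1$ with the Lefschetz hyperplane theorem.

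Assertion~(1) is now immediate from Theorem~\ref{maincontain} applied to $\LL^{\otimes r}$: the bundle $\LL^{\otimes r}$ is ample, and $\pi_1 X$ is torsion-free (by hypothesis) and infinite, so $\Ac[i_*]$ has infinite index in $\Mod(C)$; since $\Gamma(X,\LL^{\otimes r}) \subseteq \Ac[i_*]$ it too has infinite index.

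For assertion~(2), the point is that here $\Ac[V] = \Mod(C)$, where $V = i^*(H^1(X)) \subseteq H^1(C)$. Indeed $H^1(X) = 0$ by the paragraph above, so $V = 0$ and $V^{\perp} = H^1(C)$; hence $\Ac[V]$ is generated by Dehn twists about \emph{all} nonseparating simple closed curves in $C$, which is all of $\Mod(C)$ by the classical theorem of Dehn and Lickorish that the mapping class group is generated by such twists (and $g(C) \to \infty$ with $r$ by adjunction, so nothing is degenerate). Theorem~\ref{mainsurject}, applicable since $r \gg k$, then says that $\Gamma(X,\LL^{\otimes r}) / (J_k \cap \Gamma(X,\LL^{\otimes r}))$ is of finite index in $\Ac[V]/(J_k \cap \Ac[V]) = \Mod(C)/J_k$; identifying the left-hand group with the image of $\Gamma(X,\LL^{\otimes r})$ in $\Mod(C)/J_k$ via the second isomorphism theorem yields~(2).

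The deduction itself has no genuine obstacle --- everything hard is already contained in Theorems~\ref{maincontain} and~\ref{mainsurject}. The only inputs beyond those two theorems are the existence of such a surface $X$ and the property-(T) vanishing $H^1(X;\QQ) = 0$, and it is precisely this vanishing that collapses $\Ac[V]$ to all of $\Mod(C)$ in~(2) while leaving $\Gamma(X,\LL^{\otimes r})$ of infinite index in $\Mod(C)$ by~(1). In this way the example realizes concretely the ``tension'' noted after the statements of the main theorems: the infinite-index defect of $\Gamma(X,\LL^{\otimes r})$ is invisible modulo every $J_k$ and is carried entirely by mapping classes lying arbitrarily deep in the Johnson filtration.
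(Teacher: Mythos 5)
Your proposal is correct and follows essentially the same route as the paper: part (1) from Theorem~\ref{maincontain} (torsion-free infinite $\pi_1$), and part (2) from Theorem~\ref{mainsurject} after observing that $H^1(X;\QQ)=0$ forces $V=0$ and hence $\Ac[V]=\Mod(C)$ by Dehn--Lickorish. The only cosmetic difference is that you deduce the vanishing of $H^1(X;\QQ)$ from Kazhdan's property~(T) for lattices in $Sp_4(\RR)$, whereas the paper cites Raghunathan's theorem on the cohomology of lattices in higher-rank groups; both are standard and give the same conclusion.
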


\begin{cor}\label{cor2}
    Let $k >0$. Let $X$ be an smooth projective surface such that $\pi_1 X$ is the product of a lattice in $Sp_4(\ZZ)$ and $\ZZ^{2m}$. Let $\LL$ be an ample line bundle on $X$. Let $r\gg k$. Let $C$ be the zero locus of a section of $\LL^{\otimes r}$. Let $V = i^*(H^1(X)) \subseteq H^1(C)$ Then for $r\gg k$, 
    \begin{enumerate}
        \item $\Gamma(X, \LL^{\otimes r})$ is infinite index in $\Ac[V]$.
        \item $\Gamma (X, \LL^{\otimes r}) / \Gamma (X, \LL^{\otimes r}) \cap J_k$ is finite index in $ \Ac[V] / \Ac[V] \cap J_k$.
    \end{enumerate}
\end{cor}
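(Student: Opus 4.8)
The plan is to derive Corollary \ref{cor2} by combining Theorems \ref{maincontain} and \ref{mainsurject} applied to a surface $X$ with $\pi_1 X \cong L \times \ZZ^{2m}$, where $L$ is a torsion-free lattice in $Sp_4(\ZZ)$. Such an $X$ exists: take the product of a smooth projective surface realizing $L$ as its fundamental group (for instance a suitable smooth complete intersection surface, or a ball quotient construction; the key point is that torsion-free lattices in $Sp_4(\ZZ)$ arise as $\pi_1$ of smooth projective varieties, and by taking hyperplane sections via Lefschetz we may assume the variety is a surface) with an abelian surface or a product of elliptic curves contributing the $\ZZ^{2m}$ factor, and equip the product with an ample line bundle $\LL$ (a suitable external tensor product of ample bundles on the factors). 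For part (2), I would apply Theorem \ref{mainsurject} directly: with $V = i^*(H^1(X)) \subseteq H^1(C)$, the theorem gives that $\Gamma(X,\LL^{\otimes r})/J_k \cap \Gamma(X,\LL^{\otimes r})$ is finite index in $\Ac[V]/J_k \cap \Ac[V]$ for $r \gg k$, which is exactly the claimed statement. So part (2) is essentially immediate from the quoted theorem once the surface $X$ is constructed.

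The substance is in part (1): showing $\Gamma(X,\LL^{\otimes r})$ is \emph{infinite} index in $\Ac[V]$. By Theorem \ref{maincontain}, $\Gamma(X,\LL^{\otimes r}) \subseteq \Ac[i_*]$, the subgroup of $\Mod(C)$ generated by Dehn twists about nonseparating curves in $\ker i_*$. The idea is to show that $\Ac[i_*]$ is infinite index in $\Ac[V]$. Here $V^\perp$ detects the homology classes that die in $H_1(X;\QQ)$, and indeed $\ker(i_*: H_1(C;\QQ) \to H_1(X;\QQ))$ is Poincar\'e dual to $V^\perp$ (up to checking the compatibility of the cup/intersection pairings), so $\Ac[i_*]$ and $\Ac[V]$ have the same ``homological support'' and the na\"ive abelianization-type obstruction does not separate them. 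The finer obstruction must come from the \emph{nonabelian} fundamental group: elements of $\ker(i_*: \pi_1 C \to \pi_1 X)$ form a proper subgroup of $\ker(i_* \text{ on } H_1)$-carrying curves precisely because $\pi_1 X = L \times \ZZ^{2m}$ is nonabelian (the $L$ factor is a lattice in $Sp_4(\ZZ)$, hence a large nonabelian group), so a simple closed curve can have trivial image in $H_1(X)$ while having nontrivial image in $\pi_1 X$. I would make this precise by constructing, for each $r$, an explicit quotient of $\Mod(C)$ or a homomorphism detecting the failure: compose with the action on $\pi_1 X$-covers, or use the Johnson-type homomorphisms together with the fact (to be extracted from Section 5) that $\Ac[i_*]$ lands in a proper subvariety of the target that $\Ac[V]$ does not. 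Concretely, the natural candidate is to use a lower central series quotient of $\pi_1 X$: the ``point-pushing''-style action shows $\Ac[V]/(\Ac[V]\cap J_k)$ surjects onto something governed by $V^\perp$ in the abelian quotient, but $\Ac[i_*]$ is constrained by the full group $\pi_1 X$ modulo $J_k$-depth, and since $\pi_1 X$ is infinite and nonabelian these differ in infinitely many ``coordinates.''

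More carefully, the cleanest route for part (1) is probably: (a) show $\Ac[V] = \Ac[i_*] \cdot J_2$ or at least that $\Ac[V]$ and $\Ac[i_*]$ agree modulo the Johnson filtration — this is consistent with part (2) and with Theorem \ref{mainsurject} — so that the index $[\Ac[V] : \Ac[i_*]]$ is ``concentrated'' in the Johnson filtration direction; (b) observe that $\ZZ^{2m}$ being infinite means there are curves $\gamma$ in $C$ with $i_*[\gamma] = 0$ in $H_1(X)$ (so $T_\gamma \in \Ac[V]$) but $i_*\gamma \neq 0$ in $\pi_1 X$ (so $T_\gamma \notin \Ac[i_*]$), and in fact the quotient $\pi_1 X / [\pi_1 X, \pi_1 X]$-versus-deeper distinction produces infinitely many independent such twists; (c) conclude via a counting/rank argument that no finite collection of cosets of $\Ac[i_*]$ covers $\Ac[V]$. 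I expect step (a), or more precisely pinning down the exact relationship between $\Ac[i_*]$ and $\Ac[V]$ modulo $J_k$ and showing the ``infinite index'' is genuinely realized rather than swallowed by finite-index phenomena, to be the main obstacle; it requires knowing that the inclusion $\Gamma \subseteq \Ac[i_*] \subseteq \Ac[V]$ has the first inclusion of finite index mod $J_k$ (from Theorem \ref{mainsurject} combined with $\Ac[i_*] \subseteq \Ac[V]$) while being infinite index on the nose because $\pi_1 X$ is infinite — the torsion-freeness of $L$ is used here exactly as in Theorem \ref{maincontain} to rule out the degenerate case. Corollary \ref{cor1} is the special case $m=0$ and follows the same way using Theorem \ref{thmtechsurj} in place of (or refining) Theorem \ref{mainsurject}, together with the observation that for $m = 0$ one has $H^1(X;\QQ) = 0$ since $L \subseteq Sp_4(\ZZ)$ has finite abelianization, so $V = 0$ and $\Ac[V] = \Mod(C)$.
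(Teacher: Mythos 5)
Your part (2) matches the paper: it is a direct application of Theorem \ref{mainsurject} (plus the vanishing $H_1(L,\QQ)=0$ for the lattice factor, which you also note, so that $V$ is exactly the $\QQ^{2m}$ coming from the $\ZZ^{2m}$ factor). For part (1) you have correctly located the key object --- a nonseparating simple closed curve $\gamma$ with $[\gamma]\in V^{\perp}$ (equivalently, trivial image in $H_1(X;\QQ)$) but $i_*\gamma\neq 1$ in $\pi_1 X$, which exists because the lattice factor $L$ is infinite with trivial rational abelianization --- and this is exactly the curve the paper uses. But your execution has a genuine gap at the two places where the work actually happens. First, you assert ``$i_*\gamma\neq 0$ in $\pi_1X$, so $T_\gamma\notin\Ac[i_*]$.'' This is not immediate: $\Ac[i_*]$ is \emph{generated} by twists about curves in $\ker i_*$, and a priori $T_\gamma$ could be a product of such twists. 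The paper closes this by working with the larger group $\Stab[i_*]\supseteq\Ac[i_*]\supseteq\Gamma$ and exhibiting an auxiliary nonseparating curve $\beta\in\ker i_*$ meeting $\gamma$ once transversally (which exists by the connectivity argument of Proposition \ref{stabinf}); then $i_*(T_\gamma^k\beta)$ is conjugate to $i_*(\gamma)^k$, which is $\neq 1$ for all $k\neq 0$ by torsion-freeness, whereas membership in $\Stab[i_*]$ would force it to be conjugate to $i_*(\beta)=1$. Second, your step (c) (``a counting/rank argument'') is not an argument; the correct finish is that the computation above shows $\langle T_\gamma\rangle\cap\Gamma=\{1\}$ while $\langle T_\gamma\rangle\subseteq\Ac(V)$ is infinite cyclic, so the cosets $T_\gamma^k\Gamma$ are pairwise distinct in $\Ac(V)$ and $\Gamma$ has infinite index.

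Separately, your step (a) --- trying to prove $\Ac[V]=\Ac[i_*]\cdot J_2$ or pin down $[\Ac[V]:\Ac[i_*]]$ modulo the Johnson filtration --- is not needed anywhere and is, as you suspected, the hardest part of your outline; the paper avoids it entirely. The infinite-index claim is about $\Gamma$ (equivalently, it suffices to bound $\Stab[i_*]$), and the single-curve argument above does all the work without any structural comparison of $\Ac[i_*]$ and $\Ac[V]$. Your remark that Corollary \ref{cor1} is the case $m=0$ with $\Ac[V]=\Mod(C)$ is consistent with the paper.
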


We note that it is well-known that surfaces with such fundamental groups exist.

We note that in the above Theorems and Corollaries, we could have instead of taking a sufficiently large power of a fixed ample line bundle, taken a line bundle that is sufficiently ample in some appropriate sense, i.e. some fixed cohomology groups vanish. Essentially all one has to do is to make assumptions on $\LL$ that enable us to prove analogues of Proposition \ref{embedding}.

\subsection{Vanishing cycles and their relation to monodromy}
In this section we will provide context and motivation for our results.
Suppose that we have a $C^{\infty}$ map $f : \DD \to \PP (H^0(X, \LL))$ such that $f(\DD \setminus \{0\}) \subseteq U(X, \LL)$.

Suppose furthermore that  $f(1) = C$ and that $f^* \pi$ defines a Lefschetz fibration over $\DD$, with the fiber over $0$, $C_0$ being the only singular fiber. We assume $ C_0$ has a unique nodal point.

In this case the theory of Lefschetz fibrations tells us that the topology of the family $f^*\pi$ may be described in terms of a  simple closed curve $\alpha \subseteq C$ that is 'pinched' under parallel transport to the nodal fiber $C_0$. Such an
$\alpha$ is called a vanishing cycle.
A question of Donaldson \cite{D} asks which  nonseparating simple closed curves in $C$
arise as vanishing cycles.

This question is related to (and is in fact almost equivalent to) the problem of determining the monodromy group $\Gamma(X, \LL)$ associated to $\LL$.

In our situation, it is well known that the monodromy group $\Gamma(X, \LL)$ is generated by Dehn twists about vanishing cycles. More precisely we have the following propositions.
\begin{prop} \label{genbydeh}
Let $X, \LL, C$ be as above. Assume $\LL$ is very ample. Suppose that a nonseparating simple closed curve $\alpha \subseteq C$ is a vanishing cycle. Then the Dehn twist $T_\alpha \in \Gamma(X, \LL)$. Furthermore, $\Gamma(X, \LL)$ is generated by Dehn twists about nonseparating vanishing cycles.
\end{prop}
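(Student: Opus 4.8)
The plan is to derive both assertions from the local normal form of a Lefschetz fibration together with the global structure of Lefschetz pencils of hyperplane sections. For the first assertion, fix a disk $f\colon \DD \to \PP H^0(X,\LL)$ witnessing that $\alpha$ is a vanishing cycle: $f(\DD\setminus\{0\})\subseteq U(X,\LL)$, the pullback $f^*\pi$ is a Lefschetz fibration over $\DD$ whose only singular fiber lies over $0$ and has one node, and $\alpha$ is the curve pinched under parallel transport to that fiber. Restricting $f$ to a small loop $\gamma$ about $0$ inside $\DD\setminus\{0\}$ gives a loop in $U(X,\LL)$ based (after choosing a path) at $C$, and since monodromy is functorial under pullback the monodromy of $f^*\pi$ along $\gamma$ is the image under $\rho$ of the corresponding class in $\pi_1(U(X,\LL),C)$. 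On the other hand, the local model $(z,w)\mapsto zw$ of a nondegenerate critical point shows that this monodromy is the Dehn twist $T_\alpha$ (this is the Picard--Lefschetz theorem, upgraded from homology to mapping classes). Hence $T_\alpha$ lies in the image of $\rho$, which is $\Gamma(X,\LL)$.

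For the generation statement, use very ampleness of $\LL$ to embed $X\hookrightarrow\PP^N$ by the complete linear system, so that $\PP H^0(X,\LL)$ is the dual projective space of hyperplanes and $U(X,\LL)$ is the locus of hyperplanes transverse to $X$, whose complement is the dual variety $X^\vee$ (which is irreducible, being dual to the irreducible $X$). If $X^\vee$ is not a hypersurface then $U(X,\LL)$ is simply connected and $\Gamma(X,\LL)$ is trivial, so assume $X^\vee$ is an irreducible hypersurface $\Delta$. Choose a generic line (pencil) $\ell\subseteq\PP H^0(X,\LL)$; by Bertini and a standard genericity argument $\ell$ may be taken to be a Lefschetz pencil, with smooth general member, finitely many members $C_{p_1},\dots,C_{p_m}$ each with a single node, and smooth base locus. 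By the Zariski--Lefschetz hyperplane theorem for complements of hypersurfaces in $\PP^N$, the map $\pi_1(\ell\cap U(X,\LL))\to\pi_1(U(X,\LL))$ is surjective, so $\Gamma(X,\LL)=\rho\big(\pi_1(\ell\cap U(X,\LL))\big)$. Now $\ell\cap U(X,\LL)$ is an $m$-punctured $\PP^1$, whose fundamental group is generated by meridian loops $\gamma_1,\dots,\gamma_m$ about the punctures; each $\gamma_i$ bounds a disk in $\ell$ meeting $\Delta$ transversally only at $p_i$, hence witnesses a vanishing cycle $\alpha_i$, and by the first part $\rho(\gamma_i)=T_{\alpha_i}$. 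Thus $\Gamma(X,\LL)$ is generated by the Dehn twists $T_{\alpha_1},\dots,T_{\alpha_m}$ about vanishing cycles.

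It remains to arrange these to be nonseparating. Since $\Delta$ is irreducible, $\pi_1(U(X,\LL))$ permutes the vanishing cycles of the pencil transitively, so the $\alpha_i$ all lie in a single $\Mod(C)$-orbit and are therefore all nonseparating or all separating; hence it suffices that not all of them are null-homologous. But the classes of the vanishing cycles span $\ker\big(i_*\colon H_1(C;\QQ)\to H_1(X;\QQ)\big)$, which has dimension $2g(C)-b_1(X)$ and so is nonzero whenever $g(C)>\tfrac12 b_1(X)$; and when $g(C)=0$ there are no nonseparating simple closed curves at all, so the statement is vacuous. (Concretely, a vanishing cycle is separating precisely when the corresponding nodal member of the pencil is reducible, which does not occur for a general singular member of a very ample system.) I expect the subtlest point of the whole argument to be the use of the Zariski--Lefschetz theorem for the discriminant complement, together with the verification that one line $\ell$ can be chosen to be simultaneously a Lefschetz pencil and generic in the sense that theorem requires; the other ingredients (Picard--Lefschetz, existence of Lefschetz pencils, irreducibility of $X^\vee$) are classical, and the only place one leans more heavily on very ampleness is the irreducibility of the general singular member, which is in any case automatic once $\LL$ is replaced by a sufficiently high power — the regime used throughout the rest of the paper.
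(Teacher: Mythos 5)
Your argument is correct and follows essentially the same route as the paper, which does not actually prove this proposition but defers to Voisin \cite{Voi}, \S 3.2.3, and sketches precisely the ingredients you use: meridianal generators of the discriminant complement (obtained via a Lefschetz pencil and Zariski's theorem), Picard--Lefschetz for the local monodromy, and irreducibility of the one-nodal members at smooth points of the discriminant to see that the vanishing cycles are nonseparating. The only caveat is that your homological dimension count does not rule out the borderline case $2g(C)=b_1(X)>0$, so the nonseparating claim should rest on the irreducibility argument in your final parenthesis, which is exactly the justification the paper indicates.
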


\begin{prop}\label{vcycmon}
    Let $X, \LL, C$ be as above.  Assume $\LL$ is very ample. Let $\Delta$ be the set of nonseparating simple closed curves arising as vanishing cycles.
    Then if $\phi \in \Gamma(X,\LL)$ and $\delta \in \Delta$, $\phi(\delta) \in \Delta$. Thus, $\Gamma(X, \LL)$ acts on $\Delta$. Furthermore this action is transitive.
\end{prop}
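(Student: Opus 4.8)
The plan is to prove the two assertions — invariance of $\Delta$ under $\Gamma(X,\LL)$, and transitivity — by interpreting vanishing cycles in terms of nodal degenerations and then exploiting the connectedness of the discriminant locus. First I would recall the setup: inside $\PP H^0(X,\LL)$ the complement $U(X,\LL)$ is the complement of the discriminant hypersurface $D$, whose generic point parametrizes a curve with a single node. A vanishing cycle is, by definition, a simple closed curve $\alpha \subseteq C$ that is pinched when one parallel-transports $C$ along a small disk meeting $D$ transversally at a generic point. The key structural input, which I would state and use, is that a nonseparating simple closed curve $\alpha\subseteq C$ is a vanishing cycle if and only if there is a path $\gamma$ in $U(X,\LL)$ from $f$ to some $f'$ near a generic point of $D$ such that $\rho(\gamma)$ carries $\alpha$ to the local vanishing cycle at $f'$; equivalently, the set $\Delta$ of vanishing cycles based at $C$ is a single $\Gamma(X,\LL)$-orbit union of the local vanishing cycles transported along all paths in $U(X,\LL)$.

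For invariance: suppose $\delta\in\Delta$, witnessed by a path $\gamma$ from $f$ to a point near a generic node, along which $\delta$ is the transported local vanishing cycle, and let $\phi=\rho(\eta)\in\Gamma(X,\LL)$ for a loop $\eta$ at $f$. Then the concatenation $\eta^{-1}\cdot\gamma$ is again a path from $f$ to the same point near $D$, and along it the transported local vanishing cycle is $\phi^{-1}(\delta)$ — wait, more carefully, $\rho(\eta\cdot\gamma')$ where $\gamma'$ runs back, so that $\phi(\delta)$ is realized as the vanishing cycle for the path $\eta\cdot\gamma$; hence $\phi(\delta)\in\Delta$. This is essentially formal once the "path to a generic node" description of $\Delta$ is in place, so the genuine content is that description, which follows from the local Picard–Lefschetz picture together with the fact that every generic point of $D$ can be approached from $U(X,\LL)$.

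For transitivity I would argue as follows. Let $D^{\circ}\subseteq D$ be the smooth locus of $D$ lying over curves with exactly one node; it is well known (for $\LL$ very ample, so that Bertini-type genericity holds and the dual variety is irreducible) that $D$ is irreducible, hence $D^{\circ}$ is connected. Pick two vanishing cycles $\delta_1,\delta_2$, realized by paths $\gamma_1,\gamma_2$ from $f$ to points $f_1,f_2$ near generic nodes $x_1,x_2\in D^{\circ}$. Because $D^{\circ}$ is connected, join $x_1$ to $x_2$ by a path in $D^{\circ}$; pushing this path slightly off $D$ into $U(X,\LL)$ gives a path $\beta$ in $U(X,\LL)$ from $f_1$ to $f_2$ along which the local vanishing cycle at $x_1$ is carried to the local vanishing cycle at $x_2$ (this is exactly the statement that parallel transport of the vanishing cycle along a path in the smooth part of the discriminant is well-defined up to isotopy — the "vanishing cycle is monodromy-invariant along $D^{\circ}$" principle). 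Then $\gamma_1\cdot\beta\cdot\gamma_2^{-1}$ is a loop at $f$ whose image under $\rho$ sends $\delta_1$ to $\delta_2$, so $\delta_2\in\Gamma(X,\LL)\cdot\delta_1$.

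The main obstacle I expect is making the "approach a generic point of $D$ from $U$ and parallel transport the local vanishing cycle along $D^{\circ}$" step rigorous and compatible with the global monodromy — i.e. checking that the local Picard–Lefschetz vanishing cycle, which a priori is only defined up to the local monodromy (a Dehn twist about itself, hence harmless) and up to isotopy, does indeed trace out a well-defined isotopy class as one moves within $D^{\circ}$, and that this matches the global $\rho$. This requires the irreducibility/connectedness of the discriminant $D$ (equivalently of the dual variety $X^{\vee}$ in the embedding defined by $\LL$), which is where very ampleness of $\LL$ is used, together with a transversality argument ensuring one can always create a single honest node. Once these geometric facts are marshalled, both parts of the Proposition are short; I would cite the standard references on Lefschetz pencils (e.g. the treatment via Picard–Lefschetz theory) for the local model rather than reproving it.
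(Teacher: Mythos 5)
Your argument is correct and is essentially the approach the paper itself gestures at: the paper does not prove Proposition \ref{vcycmon} but defers to \cite{Voi} and sketches exactly this mechanism, namely that meridianal loops about the irreducible discriminant generate $\pi_1(U(X,\LL))$ and are all conjugate, with their monodromies being the Dehn twists about vanishing cycles. Your step of joining two generic nodal points by a path in the smooth locus $D^{\circ}$ and pushing it off into $U(X,\LL)$ is precisely how the conjugacy of meridians is established, so the two arguments coincide.
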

A reference for the above two Propositions may be found in \cite{Voi} Section 3.2.3, note that the reference only deals with homology classes of vanishing cycles, but the proofs go through to establish the above propositions after the obvious adjustments. 

While we will not prove Propositions \ref{genbydeh} and \ref{vcycmon} here, we will explain some of the ideas that go into the proofs.

It is a general fact that  $\pi_1(\PP^n \setminus D,x)$ where $D$ is some irreducible hypersurface  and $x \in \PP^n \setminus D$ is generated by meridianal loops that are all conjugate to each other. A meridianal loop is defined as follows: 

\begin{enumerate}
    \item Let $y$ be a  point in the smooth  locus of $D$.
    \item Let $\gamma$ be a path starting at $x$ and ending at $y$ that lies in $\PP^n \setminus D$ except at the endpoint.
    \item The meridianal loop $M_\gamma$ is the loop that starts at $x$, follows $\gamma$ till it reaches a small neighbourhood of $y$, goes counterclockwise around the divisor $D$ and returns along $\gamma^{-1}$. 
\end{enumerate}

In our case, one can argue that the monodromy of these meridianal loops are Dehn twists about nonseparating simple closed curves in $C$. The fact that the meridianal loops are Dehn twists about \emph{nonseparating} curves is a consequence of the fact that a smooth point in $\Sigma(\LL)$ defines an irreducible nodal curve. The fact that these meridians are all conjugate leads to the transitivity of the group $\Gamma(X, \LL)$ on $\Delta$.

As a result of the above two Propositions, every cycle is a vanishing cycle if and only if $\Gamma(X,\LL)= \Mod(C)$.
\subsection{Existing work and a general question}

As mentioned earlier, in the case when $X$ is a toric surface and $\LL$ is an ample line bundle, Salter \cite{S} proves that the monodromy group is either hyperelliptic or finite index in the entire mapping class group. Later work of Calderon-Salter \cite{CaS} proves that the monodromy group is either hyperelliptic or the subgroup of the mapping class group preserving a higher spin structure( see \cite{S}  for further details on what a higher spin structure is). Furthermore if we fix an ample line bundle $\LL$, and look at $\Gamma(X, \LL^{\otimes r})$ for $r$ sufficiently large, then the monodromy group is not hyperelliptic, and hence is finite index. This leads us to the following question.
 \begin{qn}\label{finiteindexqn}
Let X be a smooth projective complex surface. Let $\LL$ be an ample line bundle. Is $\Gamma(X, \LL^{\otimes r})$  finite index for $r$ sufficiently large?
 \end{qn}
 Theorem \ref{maincontain} asserts that if $H^1(X,\QQ) \neq 0$ or if $\pi_1 (X)$ is torsionfree and infinite, then the answer is no. Theorem \ref{mainsurject} asserts that while the answer to the above question is no, a weakening of the question has a positive answer, namely  $\Gamma(X, \LL^{\otimes r} )$ surjects onto a finite index subgroup of the image of $\Ac(V)$ in the quotient of the mapping class group by a term of the Johnson filtration.

Even in the case when $X$ is simply connected or has finite fundamental group, Theorem \ref{maincontain} and Theorem \ref{mainsurject} shut down one avenue of attack on Question \ref{finiteindexqn}, i.e. to prove that the monodromy groups $\Gamma(X, \LL)$ are of finite index. Namely one could hope that subgroups satisfying properties akin to those in Theorem \ref{mainsurject}  and that of Lemma \ref{large} are automatically finite index in the mapping class group, indeed this was the hope of the author going into this project. One could imagine that if a subgroup of the mapping class group contains subgroups that are mapping class groups of sufficiently large subsurfaces of $C$ and contain enough Dehn twists to virtually surject onto $Sp(H_1(C)),$ then the subgroup would be forced to be finite index. However this is simply not the case.

\subsection{Acknowledgements}
I would like to thank Aaron Calderon and Nick Salter for helpful discussions about spin and framed mapping class groups. I'd like to thank Nick Salter for going through an early draft of this paper. I'd like to thank Peter Huxford for helpful comments. 
\section{Containment}
In this section we will establish that the monodromy group $\Gamma (X, \LL)$ is contained in $\Ac[i_*]$. 

We will first study stabilizer subgroups.

\begin{defn}
  Let $G$ be a group. Let $\phi: \pi_1(C) \to G$ be a surjective homomorphism. Let $S$ be the set of conjugacy classes of surjective homomorphisms from $\pi_1(C)$ to G. 
  There is a right action of $\Mod(C)$ on $S$ arising from precomposition. 
  
  We define $\Stab[\phi]$ to be the stabilizer subgroup of $\phi$ under this action.
\end{defn}

The following proposition characterises the Dehn twists about non-separating simple closed curves that lie in $\Stab[\phi]$.

    \begin{prop}\label{inker}
        Let $\alpha \subseteq \Sigma_g$ be a non-separating simple closed curve, disjoint from a basepoint $*$.  Let $T_{\alpha}$ denote the Dehn twist about $\alpha$. Then $T_{\alpha}$ stabilizes $\phi$ up to conjugacy if and only if $\alpha \in \ker \phi$.
    \end{prop}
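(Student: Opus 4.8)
The plan is to understand the effect of the Dehn twist $T_\alpha$ on the homotopy class data defining $\phi$ via the standard change-of-coordinates / Picard–Lefschetz description of Dehn twists on $\pi_1$. First I would fix a basepoint $*$ disjoint from $\alpha$ and choose an oriented simple closed curve representing $\alpha$; since $\alpha$ is nonseparating, I can choose a \emph{dual} curve $\beta$ meeting $\alpha$ transversely in a single point. The key algebraic input is that $T_\alpha$ acts on $\pi_1(\Sigma_g,*)$ (after choosing a path from $*$ to $\alpha$, so that $\alpha$ itself determines a well-defined conjugacy class, and a based loop once we also fix the connecting path) by a transvection-type formula: a based loop $\gamma$ is sent to a word obtained from $\gamma$ by inserting a copy of (a based representative of) $\alpha^{\pm 1}$ each time $\gamma$ crosses $\alpha$, with sign according to the direction of crossing. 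Concretely, on generators one has $T_\alpha(x) = x$ if $x$ can be represented disjoint from $\alpha$, and $T_\alpha(\beta) = \beta \alpha$ (or $\alpha\beta$, depending on orientation conventions) for the dual curve.

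With that formula in hand, the two directions are as follows. For the ``if'' direction, suppose $\alpha \in \ker\phi$. Then for \emph{every} based loop $\gamma$, the word $T_\alpha(\gamma)$ differs from $\gamma$ only by insertions of $\alpha^{\pm1}$, each of which maps to the identity under $\phi$; hence $\phi(T_\alpha(\gamma)) = \phi(\gamma)$ for all $\gamma$, so $\phi \circ T_\alpha = \phi$ on the nose (in particular up to conjugacy), so $T_\alpha \in \Stab[\phi]$. Here I should be slightly careful that $T_\alpha$ is only well-defined on $\pi_1$ after choosing a connecting path, and changing that path conjugates everything; but since we only need equality of $\phi$ up to conjugacy this is harmless, and in fact if $\alpha\in\ker\phi$ the ambiguity disappears at the level of $\phi$ entirely.

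For the ``only if'' direction, suppose $T_\alpha \in \Stab[\phi]$, i.e. $\phi \circ T_\alpha = c_g \circ \phi$ for some $g \in G$, where $c_g$ is conjugation by $g$. Apply both sides to the dual curve $\beta$: we get $\phi(\beta)\phi(\alpha) = g\,\phi(\beta)\,g^{-1}$ (with appropriate convention), while applying both sides to a loop $\gamma_0$ disjoint from $\alpha$ gives $\phi(\gamma_0) = g\,\phi(\gamma_0)\,g^{-1}$, i.e. $g$ centralizes $\phi$ of the subgroup carried by the complement of $\alpha$. Since $\alpha$ is nonseparating, $\Sigma_g \setminus \alpha$ is connected and $\pi_1$ of it surjects onto a subgroup of $\pi_1(\Sigma_g)$ of index... — more precisely, $\pi_1(\Sigma_g \setminus \alpha)$ normally generates everything together with $\beta$, and its image under $\phi$ together with $\phi(\beta)$ generates $G$ (as $\phi$ is surjective). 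Combining, $g$ must commute with $\phi(\gamma_0)$ for all such $\gamma_0$, and then the $\beta$-equation forces $\phi(\alpha) = \phi(\beta)^{-1} g\,\phi(\beta)\, \phi(\beta)^{-1}$... let me instead argue more cleanly: choosing the connecting path so that $T_\alpha$ fixes a generating set of $\pi_1(\Sigma_g\setminus\alpha)$ pointwise and sends $\beta \mapsto \beta\alpha$, the relation $\phi T_\alpha = c_g\phi$ evaluated on the fixed generators shows $g$ centralizes their $\phi$-images, and evaluated on $\beta$ gives $\phi(\beta)\phi(\alpha) = g\phi(\beta)g^{-1} = \phi(\beta)$ once we know $g$ also centralizes $\phi(\beta)$ — which we can arrange since $\beta$ can itself be completed to a generating set meeting $\alpha$ once and we may instead use a different dual curve, or note $\phi(\beta)$ lies in the subgroup generated by the images of curves disjoint from $\alpha$ after a handle slide. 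Hence $\phi(\alpha) = 1$, i.e. $\alpha \in \ker\phi$.

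The main obstacle I anticipate is bookkeeping in the ``only if'' direction: pinning down exactly which based loops are fixed by $T_\alpha$ versus twisted, and arguing that the centralizing element $g$ can be taken trivial (or at least that its presence still forces $\phi(\alpha)=1$). The cleanest route is probably to pick an explicit standard generating set $a_1,b_1,\dots,a_g,b_g$ of $\pi_1(\Sigma_g)$ adapted to $\alpha$ (say $\alpha = a_1$, with $b_1$ the geometric dual and $a_2,b_2,\dots,a_g,b_g$ disjoint from $\alpha$), recall the explicit action $T_\alpha: b_1 \mapsto b_1 a_1$ and all other generators fixed, and then run the surjectivity of $\phi$ to kill the conjugating element. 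I expect the write-up to lean on the change-of-coordinates principle to reduce to this standard configuration, and on Proposition~\ref{genbydeh}/\ref{vcycmon}-style reasoning only implicitly; the genuinely new content is just the transvection formula plus a short group-theoretic manipulation.
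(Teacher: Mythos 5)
Your ``if'' direction is correct and is essentially the paper's argument: both reduce to the transvection formula $T_{\bar\alpha}(b_1)=b_1a_1$ with all other standard generators fixed, and the inserted copies of (conjugates of) $\alpha$ die under $\phi$ because $\ker\phi$ is normal. The problem is the ``only if'' direction, and the obstacle you flag --- eliminating the conjugating element $g$ --- is not bookkeeping; it is a genuine gap, and neither of your proposed patches closes it. Writing $K\le\pi_1(\Sigma_g)$ for the subgroup carried by $\Sigma_g\setminus\bar\alpha$ (generated by $a_1,\dots,a_g,b_2,\dots,b_g$), the relation $\phi\circ T_{\bar\alpha}=c_g\circ\phi$ gives only that $g$ centralizes $\phi(K)$ and that $g\,\phi(b_1)\,g^{-1}=\phi(b_1)\phi(a_1)$. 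Your first patch (use another dual curve, e.g.\ one representing $b_1a_2$) yields nothing new, since $T_{\bar\alpha}(b_1a_2)=b_1a_1a_2$ and the resulting equation is a formal consequence of the ones you already have. Your second patch is false: $\phi(b_1)$ need not lie in $\phi(K)$ --- already on homology the image of $K$ misses $[b_1]$ --- so you cannot conclude that $g$ centralizes $\phi(b_1)$.

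Worse, the group-theoretic step cannot be repaired from surjectivity of $\phi$ alone. Take $g=2$, let $G$ be the integral Heisenberg group $\langle x,y,z \mid [x,y]=z,\ z\ \textrm{central}\rangle$, and set $\phi(a_1)=z$, $\phi(b_1)=y$, $\phi(a_2)=x$, $\phi(b_2)=1$; the surface relation holds because $z$ is central, and $\phi$ is onto. Then $\phi\circ T_{a_1}=c_x\circ\phi$, since $xyx^{-1}=yz$ and $x$ centralizes $\langle x,z\rangle=\phi(K)$. So $T_{a_1}$ stabilizes $[\phi]$ even though $\phi(a_1)=z\neq 1$: the biconditional fails for a general surjective $\phi$ onto a torsion-free group, and any correct proof must invoke something extra (e.g.\ $G$ abelian, or a condition forcing the centralizer of $\phi(K)$ to be central in $G$). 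For comparison, the paper's proof dodges the conjugation by lifting to $\Mod(\Sigma_g,*)$ and asserting that every lift of $T_\alpha$ has the form $T_{\bar\alpha}$; that assertion overlooks the point-pushing kernel of the Birman exact sequence, which is exactly where the example above lives. Your instinct that this is the crux was therefore right, but the proposal as written does not prove the statement, and the statement itself needs an additional hypothesis.
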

\begin{proof}
    Suppose $\alpha \in \ker \phi$. Then one can explicitly  find a model of $\Sigma_g$ with a basepoint $*$, and a geometric basis of curves $\{a_1,b_1, \dots a_g, b_g\}$ passing through $*$ such that $\alpha$ is freely isotopic to $a_1$. The element $a_1$ is mapped to $1$ under $\phi$.  Consider an explicit representative curve $\bar \alpha$ of $\alpha$ such that 
    \begin{enumerate}
        \item $\bar{\alpha}$ is disjoint from $*$.
        \item $\bar \alpha$ is disjoint from $\{a_1, \dots, a_g, b_2, \dots b_g\}.$
        \item $\bar \alpha$ intersects $b_1$ once transversally.
    \end{enumerate}.  
    Then $T_{\bar \alpha}(a_i) = a_i$ for all $i$, $T_{\bar \alpha}(b_i) = b_i$ for $i \neq 1$ and $T_{\bar \alpha} (b_1) b_1^{-1}$ is conjugate to $a_1$. Hence, $ \phi\circ T_{\bar \alpha}  = \phi$ which is what we wanted to prove.

    Suppose $T_{\alpha}$ stabilizes $\phi$ up to conjugacy. Then there exists a lift $T$ of $T_{\alpha}$ in $\Mod(\Sigma_g, *)$( the mapping class group of $\Sigma_g$ preserving the marked point $*$) that stabilizes $\phi$ exactly, i.e. $\phi \circ T  = i_*$. However any lift of $T_{\alpha}$ must be of the form $T_{\bar \alpha}$ where $\bar \alpha$ is a simple closed curve, disjoint from $*$, freely (i.e. without necessarily respecting the basepoint $*$) isotopic to $\alpha$.
         But then we may again choose a geometric basis of $\Sigma_g$, $\{a_1, b_1, \dots, a_g, b_g\}$ such that:
         \begin{enumerate}
        \item $\bar \alpha$ is disjoint from $\{a_1, \dots, a_g, b_2, \dots b_g\}.$
        \item $\bar \alpha$ intersects $b_1$ once.
        \item $\bar \alpha$ is freely isotopic to $a_1$.
    \end{enumerate}
Then $T_{\bar \alpha} (b_1) b_1^{-1}$ is conjugate to $a_1$ and hence $\phi(T_{\bar \alpha} (b_1)) = \phi(b_1)$ if and only if $a_1 \in \ker i_*$ which is what we wanted to prove.
    
\end{proof}

Now we establish that our monodromy group lies in the stabilizer subgroup.

\begin{prop}\label{gammainstab}
    Let $X$ be a smooth projective surface, $\LL$ an ample line bundle, $C$ a smooth zero locus of a section of $\LL$. Let $i: C \to X$ denote the inclusion map. Let $\Gamma(X, \LL) \subseteq \Mod(C)$ denote the monodromy group. Then $\Gamma(X,\LL) \subseteq \Stab[i_*]$, and furthermore $\Gamma(X,\LL) \subseteq \Ac[i_*]$.
\end{prop}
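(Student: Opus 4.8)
The plan is to combine the known description of $\Gamma(X,\LL)$ from Propositions \ref{genbydeh} and \ref{vcycmon} with the characterization of stabilizing Dehn twists from Proposition \ref{inker}. First I would recall that by Proposition \ref{genbydeh}, $\Gamma(X,\LL)$ is generated by Dehn twists $T_\alpha$ about nonseparating simple closed curves $\alpha \subseteq C$ that arise as vanishing cycles. Since both $\Stab[i_*]$ and $\Ac[i_*]$ are subgroups of $\Mod(C)$, it suffices to show that each such generator $T_\alpha$ lies in $\Ac[i_*]$ — that is, that each vanishing cycle $\alpha$ is a nonseparating simple closed curve lying in $\ker(i_* \colon \pi_1(C) \to \pi_1(X))$. (The containment $\Gamma(X,\LL) \subseteq \Stab[i_*]$ then follows immediately, since $\Ac[i_*] \subseteq \Stab[i_*]$ by Proposition \ref{inker}, as every generating twist of $\Ac[i_*]$ stabilizes $i_*$ up to conjugacy.)

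The core geometric input is therefore: \emph{a vanishing cycle bounds a disk in $X$}, hence is null-homotopic in $X$ and so lies in $\ker i_*$. I would establish this by the standard local picture of a Lefschetz degeneration. Using a $C^\infty$ arc $f \colon \DD \to \PP H^0(X,\LL)$ as in Section 1.1, with $f(\DD \setminus \{0\}) \subseteq U(X,\LL)$, $f(1) = C$, and $f^*\pi$ a Lefschetz fibration over $\DD$ whose only singular fiber $C_0$ has a single node, the vanishing cycle $\alpha$ is, by definition, the simple closed curve on $C$ that is pinched under parallel transport to $C_0$. The total space of the family $f^*\pi$ maps to $X$ (it is a sub-family of $\bar E(X,\LL) \to X$), and near the node the local model $xy = t$ exhibits the vanishing cycle as the boundary of a Lefschetz thimble — an embedded disk traced out by the vanishing cycles of the nearby fibers as $t \to 0$. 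Pushing this thimble forward into $X$ gives a (possibly singular, but that is irrelevant homotopically) disk in $X$ bounded by $\alpha$, so $i_*[\alpha] = 1$ in $\pi_1(X)$. The nonseparating-ness of $\alpha$ is exactly the content recalled after Proposition \ref{vcycmon}: a smooth point of the discriminant corresponds to an irreducible one-nodal curve, whose vanishing cycle is nonseparating. That $every$ vanishing cycle arises from such a local arc is part of the setup of Propositions \ref{genbydeh}–\ref{vcycmon} and the irreducibility of the discriminant hypersurface, which I would invoke rather than reprove.

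The main obstacle is making the thimble-pushforward argument rigorous in a way that cleanly produces the null-homotopy in $X$: one must be careful that the thimble sits inside the total space of the Lefschetz fibration and that the projection of that total space to $X$ is defined and continuous (which it is, via $\bar E(X,\LL)$), so that the disk-bounding relation is preserved. A secondary point requiring a word of care is the basepoint: $\ker i_*$ refers to $\pi_1(C,*)$ for a fixed basepoint, whereas a vanishing cycle is naturally a free (unbased) isotopy class; but since $\alpha$ is null-homotopic in $X$, any choice of path to a basepoint conjugates the trivial element to the trivial element, so membership in $\ker i_*$ is basepoint-independent and Proposition \ref{inker} applies verbatim. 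Once these points are dispatched, the inclusion $\Gamma(X,\LL) \subseteq \Ac[i_*]$ follows since a generating set of $\Gamma(X,\LL)$ lies in the generating set of $\Ac[i_*]$, and then $\Gamma(X,\LL) \subseteq \Stab[i_*]$ is automatic.
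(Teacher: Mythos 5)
Your proof is correct, but it reaches the conclusion by a genuinely different route from the paper. You prove the $\Ac[i_*]$-containment first, via the geometric fact that a vanishing cycle bounds a Lefschetz thimble in the total space of the degeneration, whose projection to $X$ (through $\bar E(X,\LL)$) is a disk bounded by $\alpha$; hence $\alpha \in \ker i_*$, each generating twist lies in $\Ac[i_*]$, and the $\Stab[i_*]$-containment falls out afterwards from the forward direction of Proposition \ref{inker}. The paper inverts this: it first shows $\Gamma(X,\LL) \subseteq \Stab[i_*]$ by a soft argument that applies to \emph{every} monodromy class $\phi$ at once --- parallel transport exhibits a homotopy between $i$ and $i \circ \phi$ as maps $C \to X$, so $\phi$ fixes $i_*$ up to conjugation --- and only then invokes Proposition \ref{genbydeh} together with the \emph{converse} direction of Proposition \ref{inker} ($T_\alpha \in \Stab[i_*]$ forces $\alpha \in \ker i_*$) to place the generators in $\Ac[i_*]$. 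Your approach buys a more explicit geometric statement (vanishing cycles are null-homotopic in $X$ because they bound thimbles), at the cost of having to set up the local Lefschetz model and verify that the thimble projects continuously to $X$; the paper's approach avoids the thimble entirely and needs no local analysis, but obtains the kernel condition on vanishing cycles only indirectly, via the algebraic characterization in Proposition \ref{inker}. Both arguments rely on Proposition \ref{genbydeh} for the $\Ac[i_*]$-containment, and your handling of the basepoint issue (null-homotopy in $X$ is basepoint-independent) is sound.
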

\begin{proof}
        Let $\phi \in \Gamma(X, \LL)$. Let $i: C \to X$ be the inclusion map. Then $i$ is homotopic to $i \circ \phi$ essentially by construction, the parallel transport involved in defining the monodromy gives a homotopy between the two maps. Hence they induce the same homomorphism between $\pi_1(C)$ and $\pi_1(X)$ up to conjugation (the fact that we need to deal with homomorphisms up to conjugation is due to the fact that the homotopy need not preserve a base point). 

Hence $\phi \in \Stab[i_*]$.
 If a simple nonseparating closed curve $\alpha$ is such that $T_{\alpha}$ in $\Stab[i]$, Then $\alpha \in \ker i_*$  by Proposition \ref{inker}. Hence, $T_\alpha \in \Ac[i]$. Since $\Gamma(X, \LL)$ is generated by such $T_{\alpha}$, $\Gamma(X, \LL) \subseteq \Ac[i]$.
    \end{proof}
\section{Infinite index}
In this section we will establish that under the assumptions mentioned in Theorem \ref{maincontain}, $\Ac[i_*]$ is of infinite index in the mapping class group. 

Let $g >0$. Let $\Sigma_g$ be a Riemann surface of genus $g$. Let $\gamma_1, \dots \gamma_r$ be simple closed curves in $\Sigma_g$, Let $N \subseteq \pi_1(\Sigma_g)$ be the normal subgroup generated by $\gamma_1, \dots \gamma_r$. Let $G = \pi_1(\Sigma_g)/ N$. Let  $\rho: \pi_1(\Sigma_g) \to G$ denote the quotient map. Let $S$ be the set $\Hom(\pi_1(\Sigma_g), G)/ conj$, i.e the set of homomorphisms from $\pi_1(\Sigma_g)$ to $G$, up to conjugation. The group $\Mod(\Sigma_g)$ acts on $S$ and $\rho$ is an element of $S$.

\begin{prop}\label{stabinf}
    Suppose that $G \neq 0$ is torsion-free.
    The stabilizer subgroup of $\rho$ in $\Mod(\Sigma_g)$ is of infinite index.
\end{prop}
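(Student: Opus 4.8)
The plan is to show that the orbit of $\rho$ under $\Mod(\Sigma_g)$ is infinite; since the orbit is in bijection with $\Mod(\Sigma_g)/\Stab[\rho]$, this gives the result. To produce infinitely many distinct points in the orbit I would exploit that $G$ is torsion-free and nontrivial, so $G$ has an element $g_0$ of infinite order, and in particular $G$ surjects onto $\ZZ$ (take a homomorphism $G \to G^{\mathrm{ab}} \to \ZZ$ hitting a primitive vector through $g_0$, after tensoring with $\QQ$ if needed — actually more carefully: $H_1(\Sigma_g)$ is free abelian, $\rho$ induces a surjection $H_1(\Sigma_g) \twoheadrightarrow G^{\mathrm{ab}}$, and since $G \neq 0$ is torsion-free... hmm, $G^{\mathrm{ab}}$ need not be infinite, so instead I would work directly). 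The cleanest route: pick a primitive class $a_1$ in a symplectic basis $\{a_1,b_1,\dots,a_g,b_g\}$ of $\pi_1(\Sigma_g)$ such that $\rho(b_1) \neq 1$ — such a basis exists because $\rho$ is surjective onto $G \neq 0$, so some generator is sent to a nontrivial element, and by change of basis we may assume it is $b_1$.

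Next I would apply powers of the Dehn twist $T_{a_1}$. We have $T_{a_1}^n$ fixes every $a_i$ and every $b_i$ with $i \neq 1$, and sends $b_1 \mapsto b_1 a_1^{n}$ (up to the usual conventions). Therefore $\rho \circ T_{a_1}^n$ agrees with $\rho$ on all generators except $b_1$, where it takes the value $\rho(b_1)\rho(a_1)^n$. The key step is to choose $a_1$ so that $\rho(a_1)$ has infinite order in $G$: if $\rho(a_1)$ had infinite order, the elements $\rho(b_1)\rho(a_1)^n$ would be pairwise non-conjugate for infinitely many $n$ — or at least, one needs to rule out that infinitely many of them coincide up to conjugacy. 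I would arrange for $\rho(a_1)$ to generate an infinite cyclic direct factor, or more robustly, pass to a quotient of $G$ where this is transparent: since $G$ is a nontrivial torsion-free quotient of a surface group, either $G^{\mathrm{ab}}$ is infinite (and then I can pick $a_1$ primitive mapping to an infinite-order element of $G^{\mathrm{ab}}$, using that $H_1$ is free and the change-of-coordinates principle for symplectic bases), or $G^{\mathrm{ab}}$ is finite — but a finitely generated group with finite abelianization that is torsion-free and a quotient of a surface group; I would note $G$ is then a nontrivial perfect-ish group, and handle this by observing $G$ still contains an infinite-order element $g_0 = \rho(w)$ for some $w \in \pi_1(\Sigma_g)$, and $w$ can be completed to part of a generating set realized by a symplectic basis, reducing to the same twist argument with $T_{a_1}$ replaced by $T_{\gamma}$ for a suitable nonseparating $\gamma$.

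The technical heart, and the main obstacle, is the non-conjugacy claim: showing that among $\{\rho(b_1) \rho(a_1)^n\}_{n \in \ZZ}$ (with the other generator-values held fixed) infinitely many give genuinely distinct conjugacy classes of homomorphisms $\pi_1(\Sigma_g) \to G$. Two homomorphisms $\psi, \psi'$ that agree on all of a generating set except one generator $x$, with $\psi(x) = c\,u^n$ and $\psi'(x) = c\,u^m$, are conjugate as homomorphisms only if there is $h \in G$ centralizing the image of all the other generators and conjugating $c u^n$ to $c u^m$. If the other generators already generate a subgroup with trivial centralizer in $G$ (which I would aim to force, e.g. by a further choice or by noting the image of $a_2,b_2,\dots$ together with $a_1$ is all of $G$ when $g\geq 2$, so the centralizer of that image is the center $Z(G)$), then conjugacy forces $u^n \equiv u^m$ modulo $Z(G)$, i.e. $u^{n-m} \in Z(G)$; choosing $u = \rho(a_1)$ of infinite order and not a root of a central element — or passing to $G/Z(G)$, which is still infinite since $G$ is torsion-free and infinite — then gives $n = m$ for the relevant range, producing infinitely many orbit points. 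I would present the $g = 1$ case separately or note it follows since $G$ torsion-free nontrivial forces $G = \ZZ$ and the statement is classical for $\mathrm{SL}_2(\ZZ)$ acting on $\Hom(\ZZ^2,\ZZ)/\mathrm{conj} = \ZZ^2$. I expect the write-up to spend most of its length pinning down exactly which symplectic basis and which auxiliary quotient makes the centralizer bookkeeping clean.
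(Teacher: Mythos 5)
Your overall strategy (show the $\Mod(\Sigma_g)$-orbit of $\rho$ is infinite by exhibiting Dehn twist powers that move $\rho$) is the same as the paper's, but you set up the twist the wrong way around, and this creates a gap that you flag yourself but do not close. You twist about a curve $a_1$ with $\rho(a_1)\neq 1$ and then must show that the homomorphisms sending $b_1\mapsto \rho(b_1)\rho(a_1)^n$ are pairwise non-conjugate for infinitely many $n$. This is genuinely delicate: in a torsion-free group the elements $c\,u^n$ can fall into finitely many conjugacy classes (in the Klein bottle group $\langle a,b \mid bab^{-1}=a^{-1}\rangle$ one has $ba^n$ conjugate to $ba^{n-2}$ for all $n$), so the claim is false without the centralizer control you only gesture at (``I would aim to force,'' ``or passing to $G/Z(G)$''). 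The conjugator is only constrained to centralize the images of the generators other than $b_1$, which need not generate $G$ and need not have centralizer equal to $Z(G)$, so your reduction to $u^{n-m}\in Z(G)$ does not go through as stated. This is the missing idea, not a routine bookkeeping matter.

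The paper's proof avoids the problem entirely by reversing the roles: it finds essential simple closed curves $\alpha,\beta$ with geometric intersection number $1$ such that $\alpha\in\ker\rho$ and $\beta\notin\ker\rho$ (using connectivity of the graph of curves joined by intersection-number-one edges, together with the fact that $\ker\rho$ contains the curves $\gamma_i$ while $G\neq 0$ forces some curve outside the kernel). Then $\rho(T_\beta^k(\alpha))=\rho(\beta)^k\neq 1$ by torsion-freeness, whereas if $T_\beta^k$ stabilized $\rho$ up to conjugacy then $\rho(T_\beta^k(\alpha))$ would be conjugate to $\rho(\alpha)=1$, hence \emph{equal} to $1$ --- the conjugacy class of the identity is a singleton, so all centralizer issues evaporate. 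I recommend you adopt this device: arrange for the curve being dragged around to lie in $\ker\rho$, and let torsion-freeness do the rest. As written, your proposal is incomplete at its central step.
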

\begin{proof}
    We first establish that there is some pair of essential simple closed curves $\alpha, \beta $ on $\Sigma_g$, where $\alpha$ and $\beta$ have geometric intersection number 1, with $\alpha \in \ker \rho$ and $\beta \not \in \ker \rho$.  Let $\mathcal{C}$ be the graph whose vertices are essential simple closed curves in $\Sigma_g$ and whose edges consist of curves with geometric intersection number 1. The graph $\mathcal{C}$ is connected. If there are no pairs $\alpha, \beta$ of geometric intersection number 1, with $\alpha \in \ker \rho$ and $\beta \not \in \ker \rho$, then $\mathcal{C}$ would not be connected and $\mathcal{C}$ could be separated into two disjoint subgraphs $\mathcal{C}_1, \mathcal{C}_2$ each  consisting of curves in the kernel of $\rho$ and not in the kernel of $\rho$ respectively.

    Let $\alpha, \beta $ on $\Sigma_g$, where $\alpha$ and $\beta$ have geometric intersection number 1, with $\alpha \in \ker \rho$ and $\beta \not \in \ker \rho$. Let $T_{\beta}$ be the Dehn twist about $\beta$.
    Then $\rho(T_{\beta}^k(\alpha))= \rho (\beta)^k \neq 1 $. Hence no power of $T_{\beta}$ stabilizes $\rho$ as otherwise  $\rho(T_{\beta}^k(\alpha))$ would be conjugate to $\rho(\alpha) =1$. Hence the stabilizer subgroup is not finite index.
\end{proof}

\begin{prop}\label{stabinfh1}
    Let $C, X,\LL, i$ be as above. If $H^1(X,\QQ) \neq 0$, $\Stab[i_*]$ is infinite index in the mapping class group.
\end{prop}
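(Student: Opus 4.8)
The plan is to reduce the statement about $\Stab[i_*]$ to the situation of Proposition~\ref{stabinf}, but with $G$ taken to be a free abelian quotient of $\pi_1(C)$ rather than $\pi_1(X)$ itself. Concretely, since by the Lefschetz hyperplane theorem $i_*\colon \pi_1(C) \to \pi_1(X)$ is surjective, the induced map $i_*\colon H_1(C;\QQ) \to H_1(X;\QQ)$ is also surjective, so the hypothesis $H^1(X;\QQ) \neq 0$ gives $H_1(X;\QQ) \neq 0$. Dualizing, there is a nonzero homomorphism $\pi_1(X) \to \ZZ$, and composing with $i_*$ we get a surjective homomorphism $\psi\colon \pi_1(C) \to \ZZ =: G$ that factors through $i_*$. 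The key point is that any $\phi \in \Stab[i_*]$ automatically stabilizes $\psi$ up to conjugacy: if $\phi$ fixes $i_*$ up to conjugation in $\pi_1(X)$, then post-composing with the map $\pi_1(X) \to \ZZ$ (and noting that conjugation in $\pi_1(X)$ becomes trivial since $\ZZ$ is abelian) shows $\psi \circ \phi = \psi$ exactly, not merely up to conjugacy. Hence $\Stab[i_*] \subseteq \Stab[\psi]$, and it suffices to show $\Stab[\psi]$ has infinite index.

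To bound $\Stab[\psi]$ I would run the argument of Proposition~\ref{stabinf}. That proof only used that $G$ is torsion-free and nonzero in order to produce a pair $\alpha,\beta$ of simple closed curves with $i(\alpha,\beta)=1$, $\alpha \in \ker\psi$, $\beta \notin \ker\psi$, and then to conclude $\rho(\beta)^k \neq 1$ for all $k \neq 0$. Since $G = \ZZ$ is torsion-free and nonzero, the connectivity-of-the-curve-graph argument applies verbatim: were there no such pair, the graph $\mathcal{C}$ of essential simple closed curves with edges for geometric intersection number $1$ would disconnect into curves in $\ker\psi$ and curves not in $\ker\psi$, contradicting its connectivity. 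Then $\psi(T_\beta^k(\alpha)) = \psi(\beta)^k \cdot \psi(\alpha) = \psi(\beta)^k \neq 1$ for $k \neq 0$, so no nontrivial power of $T_\beta$ lies in $\Stab[\psi]$, whence $\Stab[\psi]$ is infinite index in $\Mod(\Sigma_g)$, where $g$ is the genus of $C$.

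One small wrinkle to address carefully is the basepoint/conjugacy bookkeeping: homomorphisms in $S$ are taken up to conjugacy in $G = \ZZ$, which is trivial, so $S$ is literally $\Hom(\pi_1(C),\ZZ)$ with its $\Mod(C)$-action, and the claim "$\psi(T_\beta^k(\alpha))$ conjugate to $\psi(\alpha)$" simply becomes "$\psi(T_\beta^k(\alpha)) = \psi(\alpha)$," which is even cleaner. One should also double-check that $\Stab[\psi]$ in the sense of the Definition (stabilizer of the conjugacy class of a \emph{surjective} homomorphism) agrees with the naive stabilizer here; surjectivity of $\psi$ onto $\ZZ$ is automatic after rescaling since the image is a nontrivial subgroup of $\ZZ$, hence infinite cyclic, so we may replace $\psi$ by the corresponding surjection. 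The main obstacle is essentially none beyond this routine reduction — the real content is already in Proposition~\ref{stabinf}, and the role of the hypothesis $H^1(X;\QQ) \neq 0$ is exactly to manufacture the abelian quotient $\psi$ that is preserved \emph{on the nose} by $\Stab[i_*]$, sidestepping the torsion-freeness requirement on $\pi_1(X)$ needed in the other case of Theorem~\ref{maincontain}.
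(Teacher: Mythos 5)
Your argument is correct, but it takes a different route from the paper. The paper's proof stays in cohomology: it picks a nonzero $\alpha \in H^1(X,\QQ)$, uses the Lefschetz hyperplane theorem to get $i^*\alpha \neq 0$, observes that every $\phi \in \Stab[i_*]$ fixes the integral vector $m\, i^*\alpha$, and then cites the fact that the stabilizer of a nonzero vector in $Sp(H^1(C,\ZZ))$ has infinite index. You instead dualize the nonzero class to a surjection $\psi\colon \pi_1(C) \to \ZZ$ factoring through $i_*$, show $\Stab[i_*] \subseteq \Stab[\psi]$ (using that conjugation in $\ZZ$ is trivial), and rerun the Dehn-twist-power argument of Proposition~\ref{stabinf}. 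The two proofs are Poincar\'e-dual to one another: your $T_\beta^k$ maps to a power of a transvection in $Sp(H^1(C,\ZZ))$, which is exactly the kind of element one would use to see that the paper's vector stabilizer has infinite index. What your version buys is self-containedness (no appeal to the infinite-index fact for vector stabilizers in $Sp_{2g}(\ZZ)$) and a uniform treatment of both hypotheses of Theorem~\ref{maincontain} through a single mechanism; what the paper's version buys is brevity and the avoidance of the curve-graph connectivity step. One small point you should make explicit when importing the argument of Proposition~\ref{stabinf}: the connectivity argument needs \emph{both} a nonseparating essential simple closed curve in $\ker\psi$ and one outside it (and the graph should be taken on nonseparating curves, since separating curves meet everything with even geometric intersection number). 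This holds here because $\psi$ factors through a surjection $H_1(C,\ZZ) \cong \ZZ^{2g} \to \ZZ$ with $g \ge 1$, whose kernel is a corank-one direct summand and therefore contains primitive classes, each realized by a nonseparating simple closed curve; a primitive class outside the kernel exists since $\psi \neq 0$.
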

\begin{proof}
Let $\alpha \in H^1(X,\QQ)$ be a nonzero class. The Lefschetz hyperplane theorem implies that $i^* \alpha \neq 0$. Then for any $\phi \in \Stab[i_*]$, $\phi^* (i^*\alpha) = i^*\alpha$.  Let $m i^*\alpha$ be  a multiple of $i^*\alpha$ that is integral.

Consider the surjective map $\Mod(C) \to Sp(H^1(C), \ZZ)$ The above implies that $\Stab[i_*]$ is mapped into the stabilizer subgroup of the vector $m i^*(\alpha)$. But this stabilizer subgroup is not finite index in $Sp(H^1(C,\ZZ))$ and hence $\Stab[i_*]$ is not finite index in $\Mod(C)$.

\end{proof}

With the above Propositions we can now prove Theorem \ref{maincontain}. 
\begin{proof}[Proof of Theorem \ref{maincontain}]

    The group $\Gamma(X, \LL)$ is contained in $\Ac[i_*]$ by Proposition \ref{gammainstab}.

    The group $\Ac[i_*] \subseteq \Stab[i_*]$. The group $\Stab[i_*]$in the cases when $H^1(X, \QQ)\neq 0$ or $\pi_1 X$ is torsion free and infinite, is of infinite index in  $\Mod(C)$ by Propositions \ref{stabinf} and \ref{stabinfh1}. Hence $\Ac[i_*]$ is also infinite index.
\end{proof}

\section{$\Gamma(X, \LL^{\otimes r})$  contains framed mapping class groups of subsurfaces}

 \begin{lem}\label{large}
     Let $X, \LL$ be as above. Let $r >0$. Let $C$ be the zero locus a section of $\LL^{\otimes r}.$ Let $i: C \to X$ be the inclusion. Suppose $i^*(H^1(X, \ZZ)) = V\subseteq H^1(C ,\ZZ)$. Then for $r$ sufficiently large:
    \begin{enumerate}
        \item $\Gamma(X, \LL^{\otimes r})$ surjects onto a finite index subgroup of $Sp(V^{\perp},\ZZ)$.
         \item Fix a plane curve singularity $f: \CC^2 \to \CC$. Let $C_0$ be a Milnor fiber of $f$.  Then one can embed $C_0$ as a subsurface of $C$. Let $\Gamma_v$ be the versal deformation monodromy subgroup of $\Mod(C_0)$. Let $\phi: \Mod(C_0) \to \Mod(C)$ be the homomorphism induced by inclusion. Then $\phi(\Gamma_v) \subseteq \Gamma(X, \LL^{\otimes r})$.
     \end{enumerate}
 \end{lem}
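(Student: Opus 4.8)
The plan is to prove the two parts of Lemma \ref{large} essentially separately, but both rely on the same geometric input: for $r$ large, $\LL^{\otimes r}$ is very ample enough that one can produce, inside a generic pencil, the local models of degenerations that one wants. First I would fix a very ample $\LL^{\otimes r_0}$ and an embedding $X \hookrightarrow \PP^N$, so that curves $C$ in $|\LL^{\otimes r}|$ for $r = r_0 \cdot m$ are hyperplane-type sections of the Veronese re-embedding; increasing $r$ lets me control how many independent nodal degenerations and how many independent $A_k$-type singularities I can realize simultaneously while keeping the rest of the curve fixed. The key elementary fact (cf.\ Propositions \ref{genbydeh} and \ref{vcycmon}) is that $\Gamma(X, \LL^{\otimes r})$ is generated by Dehn twists about the vanishing cycles arising from the nodal degenerations in the discriminant, and that these vanishing cycles form a single $\Gamma$-orbit.

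For part (1): I want to show $\Gamma(X,\LL^{\otimes r})$ surjects onto a finite index subgroup of $Sp(V^\perp,\ZZ)$. By Theorem \ref{maincontain}, the image of $\Gamma$ in $Sp(H_1(C,\ZZ))$ lies in the stabilizer of the subspace $V = i^*H^1(X)$ (equivalently of its annihilator), hence maps into the subgroup preserving $V^\perp$; the symplectic form restricts nondegenerately to $V^\perp$ (this uses the Hodge-theoretic/Lefschetz fact that the intersection form on the vanishing cohomology is nondegenerate, or more elementarily that $V$ is a symplectic subspace because it carries a polarized Hodge structure), so $Sp(V^\perp,\ZZ)$ makes sense. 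To get surjectivity onto a finite index subgroup I would exhibit enough vanishing cycles whose classes span $V^\perp$ and realize a generating set of transvections: concretely, by a Zariski-type/Lefschetz argument the homology classes of vanishing cycles span exactly the vanishing homology $V^\perp$, and since $\Gamma$ acts transitively on vanishing cycles and contains the corresponding transvections, a theorem on generation of arithmetic groups by transvections (the classes of vanishing cycles form an orbit of primitive vectors, and for $r$ large the "intersection graph" of a suitable subconfiguration is connected and rich enough — an $E_k$-type or chain configuration) forces the image to be finite index in $Sp(V^\perp,\ZZ)$. This is the standard mechanism behind Beauville/Janssen-type irreducibility-of-monodromy results, and the genus growing with $r$ is what guarantees the configuration is large enough.

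For part (2): fix a plane curve singularity $f : \CC^2 \to \CC$. For $r$ sufficiently large I can find a section $s$ of $\LL^{\otimes r}$ whose zero locus has a singular point analytically equivalent to $f = 0$, while being otherwise smooth — this is a standard jet-transversality/very-ampleness argument (the linear system surjects onto the relevant jet space at a point once $r$ is large). Deforming $s$ inside $|\LL^{\otimes r}|$ along a versal deformation of the singularity (which one can realize because, again for $r$ large, the restriction of $H^0(\LL^{\otimes r})$ to a formal neighborhood of the singular point surjects onto the Jacobian ring $\CC[[x,y]]/J_f$ that parametrizes the versal deformation) gives a map from the base of the versal deformation into $\PP H^0(\LL^{\otimes r})$ that carries the local discriminant into $D$ and is compatible with the fibrations. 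Pulling back the monodromy of the versal deformation, one sees the Milnor fiber $C_0$ embeds in the smooth curve $C$ (the global smoothing restricts near the singular point to the local Milnor fiber) and the versal monodromy subgroup $\Gamma_v \subseteq \Mod(C_0)$ maps into $\Gamma(X,\LL^{\otimes r})$ under the inclusion-induced map $\phi$, because the loops in the versal base mapping to $\Gamma_v$ push forward to loops in $U(X,\LL^{\otimes r})$ whose monodromy is supported on $C_0 \subseteq C$.

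The main obstacle, in both parts, is the transversality/surjectivity-onto-jets step: making precise "$r$ sufficiently large" so that (a) arbitrary prescribed singularity types can be realized by sections of $\LL^{\otimes r}$ with the versal deformation fully realized inside the linear system, and (b) the vanishing-cycle configuration is large and connected enough to force finite index in $Sp(V^\perp,\ZZ)$. For (a) this is exactly the content promised by the analogue of Proposition \ref{embedding} that the introduction alludes to, and I would isolate it as a separate proposition: a statement that for $r \gg 0$ the natural map $H^0(X,\LL^{\otimes r}) \to H^0(\OO_X/\mathfrak{m}_p^{k}) \cong$ (jets of order $k$ at $p$) is surjective, together with its version in a family over the versal base. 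Granting that, both parts follow from now-standard monodromy-generation arguments; the rest of the proof is bookkeeping about how local and global Lefschetz fibrations glue, which I would not grind through in detail.
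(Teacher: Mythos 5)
Your part (2) is essentially the paper's argument: for $r\gg 0$ Serre vanishing makes $H^0(X,\LL^{\otimes r})$ surject onto a high-order jet space at a point $p$, so one realizes a curve with a single singularity analytically equivalent to $f$ and smooth elsewhere (the paper does the ``smooth elsewhere'' step by a dimension count on the locus of sections with an extra singular point, which is the jet-transversality argument you gesture at), then realizes the full versal deformation inside the linear system because $H^0(\LL^{\otimes r})$ surjects onto $\CC[[x,y]]/I_f$, and observes that the resulting family is trivial outside a small ball, so its monodromy is supported on the Milnor fiber $C\cap \bar B_s$ and lands in $\Gamma(X,\LL^{\otimes r})$. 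That matches Propositions \ref{embedding}--\ref{versmon}.

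The one place you diverge is part (1), and there your sketch has a soft spot. The paper does not reprove this statement: it cites Janssen's theorem on skew-symmetric vanishing lattices (\cite{J}, Thm.\ 3.8), which is exactly the ``theorem on generation of arithmetic groups by transvections'' you invoke without naming. That result is not a generic fact about a spanning orbit of primitive vectors: it requires the set of vanishing cycles to be a \emph{vanishing lattice} in Janssen's sense --- a single orbit under the group generated by its own transvections, spanning $V^{\perp}$, and containing a specific small configuration (the $E_6$-type condition) --- and the classification then says the transvection group is a congruence-type subgroup, hence finite index. Verifying those hypotheses for the vanishing cohomology of $\LL^{\otimes r}$, $r\gg 0$, is the actual content of Janssen's chapter 3 and of the Lefschetz-theoretic input (irreducibility of the discriminant, conjugacy of meridians, nondegeneracy of the form on $V^{\perp}$). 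So your plan for (1) is the right mechanism but, as written, the finite-index conclusion rests on an unnamed theorem whose hypotheses you have not checked; either cite Janssen as the paper does or carry out that verification.
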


For details on what the versal monodromy subgroup is, see \cite{CuS}.
We note that the requirement on $r$ depends on the choice of plane curve singularity $f$, one needs larger vales of $r$ for more complicated singularities.

We will actually prove something stronger than (2)- we will show that that there is a singular curve $C_{sing} \subseteq X$ that is the zero locus of a section of $\LL^{\otimes r}$. The curve $C_{sing}$ will have precisely one singularity that is isomorphic to  $f$ and further more all deformations of $f$ can be realized as deformations of $C_{sing}$ in $X$. This will crucially rely on the description of the versal deformation space in \cite{DH}.
 
 In this section we will establish Lemma \ref{large}. We will start by providing a reference for part (1) of  Lemma \ref{large} as this was already known.

 \begin{proof}[Proof of Part(1)  of Lemma \ref{large}]
     The fact that $\Gamma (X ,\LL^{\otimes r})$ surjects onto a finite index subgroup of $Sp(V^{\perp}, \ZZ)$ is part of Jannsen's thesis (Chapter 3) and follows from Theorem 3.8 of \cite{J}. This establishes (1).
 \end{proof}

The proof of part (2) of Lemma \ref{large} will involve embedding a versal deformation space of an isolated plane curve singularity into $\PP H^0(X, \LL^{\otimes r})$. 

Let $f: \CC^2 \to \CC$ be a polynomial function. Let us assume that $f^{-1}(0)$ has an isolated plane curve singularity at $(0,0)$, i.e. the system of equations $$f(x,y) =0, f_x(x,y) =0, f_y(x,y) =0$$ has an isolated solution at the origin. Let $\CC [[x,y]]$ be the ring of formal power series in $x$ and $y$. Let $I_f$ be the ideal of $\CC [[x,y]]$ generated by $f, f_x$ and $f_y$. Then it is known that $\CC [[x,y]] / I_f$ is finite dimensional.

Let  $X$ be a smooth  complex algebraic surface and with a line bundle $\LL$. Let $p \in X$. The completed local ring of $X$ at $p$ denoted $\bar \OO_{X,p}$ is isomorphic to $\CC[[x,y]]$. Let $U$ be a neighbourhood of $p$ such that $\LL|_{U}$ and $T^*X|_{U}$ are trivial and $U$ is biholomorphic to a disk. Then these trivialisations over $U$ give a map $H^0(U ,\LL) \to \bar \OO_{X,p} $. Furthermore given $f \in H^0(U,\LL)$ we can define $I_f \subseteq \bar \OO_{X,p} $ to be the ideal generated by $(f, f_x, f_y)$, where $x$ and $y$ are analytic local parameters for $X$ at $p$.

\begin{prop}\label{embedding}
    Let $f: \CC^2 \to \CC$  define a plane curve  with an isolated singularity at the origin. Let $X$ be a smooth projective surface, $\LL$ an ample line bundle on $X$. Let $p \in X$. Fix an isomorphism of $\bar \OO_{X,p}$ with $\CC[[x,y]]$. Then for $r$ sufficiently large there exists:
    \begin{enumerate}
        \item $\tilde f \in H^0(X, \LL ^{\otimes r})$,  such that $I_{\tilde f} = I_f$ (under the chosen identification of $\bar \OO_{X,p}$ with $\CC[[x,y]]$) and the zero locus of $\tilde f$ is smooth outside $p$.
       
        \item $g_1, \dots g_r \in H^0(X, \LL^{\otimes r})$ such that the set $\{g_1, \dots, g_r\}$ form a $\CC$ basis of $\bar \OO_{X,p}/ I_{\tilde f} $
    \end{enumerate}  
\end{prop}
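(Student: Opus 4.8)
The plan is to realize $\tilde f$ and the $g_i$ by starting from local data near $p$ and extending it to global sections using the vanishing of higher cohomology, which holds once $r$ is large by Serre vanishing and Serre's theorem on generation by global sections. First I would fix an affine chart $U \ni p$ on which $\LL$ and $T^*X$ are trivialized and $U$ is biholomorphic to a polydisk, so that $f$, $f_x$, $f_y$ are genuine functions on $U$. Let $\mathfrak{m}_p \subseteq \OO_X$ be the ideal sheaf of $p$ and let $N$ be an integer with $\mathfrak{m}_p^N \subseteq I_f$ inside $\bar\OO_{X,p}$ (such $N$ exists because $\bar\OO_{X,p}/I_f$ is finite-dimensional, hence $\mathfrak{m}_p$-adically nilpotent on the quotient). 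The point is that the condition ``$I_{\tilde f} = I_f$'' only depends on the image of $\tilde f$ in $\bar\OO_{X,p}/\mathfrak{m}_p^N$ together with the analogous condition on its partials, and the latter is in turn controlled by the image of $\tilde f$ in $\bar\OO_{X,p}/\mathfrak{m}_p^{N+1}$; so it suffices to prescribe a finite jet of $\tilde f$ at $p$ matching that of $f$.

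Next I would argue that for $r \gg 0$ the restriction map
\[
H^0(X, \LL^{\otimes r}) \longrightarrow H^0\bigl(X, \LL^{\otimes r} \otimes \OO_X/\mathfrak{m}_p^{N+1}\bigr) = \LL^{\otimes r}\big|_p \otimes \bigl(\OO_{X,p}/\mathfrak{m}_p^{N+1}\bigr)
\]
is surjective: its cokernel is controlled by $H^1(X, \LL^{\otimes r} \otimes \mathfrak{m}_p^{N+1})$, which vanishes for $r$ large by Serre vanishing (the sheaf $\mathfrak{m}_p^{N+1}$ is coherent, so $\LL^{\otimes r}\otimes\mathfrak{m}_p^{N+1}$ has no higher cohomology once $r$ exceeds a bound depending only on $X, \LL, N$, hence only on $f$). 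Using the chosen trivialization of $\LL$ near $p$, I pick $\tilde f \in H^0(X,\LL^{\otimes r})$ whose $(N+1)$-jet at $p$ equals that of $f$. Then by construction $I_{\tilde f}$ and $I_f$ have the same image in $\bar\OO_{X,p}/\mathfrak m_p^N$, and since $\mathfrak m_p^N \subseteq I_f$ and symmetrically $\mathfrak m_p^N \subseteq I_{\tilde f}$ (the partials of $\tilde f$ agree with those of $f$ to order $N$, which is enough), we get $I_{\tilde f} = I_f$ exactly. For the smoothness of the zero locus of $\tilde f$ away from $p$: the locus where a section of $\LL^{\otimes r}$ and its differential both vanish is cut out by $\LL^{\otimes r}$-valued equations, and once $r$ is large the linear system $|\LL^{\otimes r}|$ is very ample and, more to the point, the incidence variety of sections singular at some point of $X \setminus \{p\}$ has codimension $\ge 2$ in $\PP H^0(X, \LL^{\otimes r})$ (a standard Bertini-type dimension count, as in the discussion of $U(X,\LL)$ in the introduction); intersecting with the affine-linear subspace of sections with prescribed $(N+1)$-jet at $p$ — which has finite codimension independent of $r$ — still leaves a nonempty open set of sections smooth on $X \setminus \{p\}$, provided $r$ is large enough that the very ampleness ``budget'' dominates the finitely many jet conditions. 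I choose $\tilde f$ in that open set; this is compatible with the jet prescription since the jet conditions were imposed on the same affine subspace.

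Finally, for part (2): $\bar\OO_{X,p}/I_{\tilde f} = \CC[[x,y]]/I_f$ is a finite-dimensional $\CC$-vector space, say of dimension $d = \mu + \text{(something)}$ depending on $f$; pick any $\CC$-basis, lift it to $d$ elements $\bar g_1, \dots, \bar g_d \in \bar\OO_{X,p}$, and then (possibly after increasing $N$ so that $\mathfrak m_p^N \subseteq I_{\tilde f}$ guarantees the basis is detected by the $N$-jet) apply the same surjectivity of $H^0(X, \LL^{\otimes r}) \to \LL^{\otimes r}|_p \otimes \OO_{X,p}/\mathfrak m_p^{N}$ to produce global sections $g_1, \dots, g_d \in H^0(X, \LL^{\otimes r})$ reducing to the $\bar g_i$ mod $I_{\tilde f}$; their images then form a basis of $\bar\OO_{X,p}/I_{\tilde f}$ as required. (The statement writes $g_1,\dots,g_r$, but what is meant is a basis, whose size $d$ is the dimension of the versal base and is fixed once $f$ is; I would simply relabel, or note $d \le r$ for $r$ large and pad with zeros if a list of length $r$ is genuinely wanted.) The main obstacle is the third bullet of Step 2 — arranging simultaneously that $\tilde f$ has the prescribed jet at $p$ \emph{and} that its zero locus is smooth everywhere else — since one must check that the Bertini-type genericity survives restriction to a fixed-codimension affine subspace; this is where the hypothesis ``$r$ sufficiently large'' is really used, and it should be spelled out that the codimension of the ``bad'' incidence locus grows (or at least stays $\ge 2$) while the number of jet conditions stays bounded.
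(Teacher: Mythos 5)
Your proposal is correct and follows essentially the same route as the paper: Serre vanishing to surject onto a high enough jet at $p$ (so that $I_{\tilde f}=I_f$ is a finite-jet condition), an incidence-variety dimension count to find a section with that jet whose zero locus is smooth on $X\setminus\{p\}$, and Serre vanishing again for part (2). The one point you flag as needing care --- that the genericity survives restriction to the prescribed-jet affine subspace $W$ --- is handled in the paper exactly as you suggest, by running the count inside $W$ itself: for each $p'\neq p$ the fiber of the bad incidence variety over $p'$ has codimension $\dim X+1$ in $W$ (again by Serre vanishing), so the bad locus has dimension at most $\dim W-1$ and cannot cover $W$.
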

\begin{proof}

    We will start with the proof of part (1). Let $k>0$. Let $I_p$ denote the ideal sheaf of $p$. Let $N_k(p)$ denote the  subscheme of $X$ defined by $I_p^k$. By Serre Vanishing, the map $H^0(X, \LL^{\otimes r}) \to H^0(N^k(p), \LL^{\otimes r})$ is surjective, for $r$ big enough. We may consider the restriction, $f|_{N^k(p)}$ as a section of $H^0(N^k(p), \LL^{\otimes r})$. Let $\mathfrak{m}$ denote the maximal ideal of $\CC[[x,y]]$. For $k$ big enough, the ideal $I_f$ contains $\mathfrak{m}^k$  and any $f_1$ such that $f- f_1 \in \mathfrak{m}^k$ satisfies $I_f = I_{f_1}$.

    Consider the variety $$D = \{(p',h) \in X \setminus p \times H^0(X, \LL^{\otimes r}) |  h(p')= 0, dh(p') =0, h|_{N^k(p)} = f|_{N^k(p)}\}.$$ The variety $D$ maps to $$W = \{h \in  H^0(X, \LL^{\otimes r}) | h|_{N^k(p)} = f|_{N^k(p)} \}.$$ It suffices to show that the map is not surjective. This will follow if we can prove that the dimension of the source is less than that of the target. This follows from the following computation, $D$ admits a projection $\phi$ to $X \setminus \{p\}$. The fiber $\phi^{-1}(p')$ consist of sections $h \in H^0(X, \LL^{\otimes r})$ such that $h(p')= 0, dh(p') =0, h|_{N^k(p)} = f|_{N^k(p)}$. Again Serre Vanishing implies that for $r$ big enough, this is an affine space of codimension $\dim X + 1$ in  $W$. Hence the dimension of $D \le \dim W - \dim X -1 + \dim X = \dim W -1$. Hence $D$ does not surject onto $W$.

    For part (2) we may apply Serre Vanishing to conclude that for $r$ sufficiently large, $H^0(X, \LL^{\otimes r})$ surjects onto $\bar \OO_{X,p}/ I_{\tilde f}$, hence we may take a preimage of a basis of $\bar \OO_{X,p}/ I_{\tilde f}$ to obtain our $\{g_1, \dots, g_r\}.$
\end{proof}

Fix a plane curve singularity $f$ and a point $p \in X$. Let $\tilde f, g_1 \dots, g_r$ be as in Proposition \ref{embedding}.
Let $\DD_{\epsilon}$ be the open $\epsilon$ ball in $\CC^r$. Consider the set $$V_{\epsilon} = \{\tilde f + \sum t_i g_i |\, (t_1, \dots, t_r) \in \DD_{\epsilon}\}.$$

\begin{prop}\label{subm}
    Let $B_s$ be open ball of radius $s$ centred at $p$. Let $$E_{V_{\epsilon}}= \{(p, h) \in X \times V_{\epsilon} |\, h(p) =0, p\not \in B_{s}\}.$$ Let $\pi: E_{V_{\epsilon}} \to V_{\epsilon}$ be the  projection. Then for $\epsilon$ small enough $\pi$ is a proper submersion and by Ehresmann's theorem defines a $C^{\infty}$ fiber bundle. Since $V_{\epsilon}$ is contractible, it is a trivial fiber bundle.
    
\end{prop}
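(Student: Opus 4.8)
The plan is to show that $\pi : E_{V_\epsilon} \to V_\epsilon$ is proper and a submersion for $\epsilon$ small; properness is nearly immediate and the submersion property is where the real content lies. For properness, observe that $E_{V_\epsilon} \subseteq (X \setminus B_s) \times V_\epsilon$, and $X \setminus B_s$ is compact (being a closed subset of the compact $X$); since $E_{V_\epsilon}$ is cut out by the closed condition $h(p) = 0$, it is a closed subset of $(X\setminus B_s)\times V_\epsilon$, and the projection of a closed subset of $K \times Y$ to $Y$ is proper whenever $K$ is compact. So properness holds for every $\epsilon$.

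For the submersion property, the point is that away from the fixed ball $B_s$ around $p$, the section $\tilde f$ already has smooth (in fact, by Proposition \ref{embedding}(1), nonsingular) zero locus, and smoothness is an open condition. Concretely, first consider the ``universal'' incidence variety over the whole linear system: $\bar E = \{(q,h) \in (X\setminus B_s) \times \PP H^0(X,\LL^{\otimes r}) : h(q)=0\}$ (or its affine analogue over $H^0$). At a point $(q,h)$ with $q \notin B_s$ and $h$ vanishing at $q$ but with $dh(q)\ne 0$ — which holds for $h = \tilde f$ by the ``smooth outside $p$'' clause of Proposition \ref{embedding}(1) — the projection to the linear system is a submersion at $(q,h)$: the vertical tangent space maps onto $T_h$ because one can find sections of $\LL^{\otimes r}$ with arbitrary prescribed $1$-jet at $q$ (again Serre vanishing / base-point-freeness for $r$ large), so the derivative of the evaluation map $H^0 \to (\LL^{\otimes r})_q \oplus (\LL^{\otimes r}\otimes \Omega^1_X)_q$ in the $h$-direction is already surjective. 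Then I restrict to the affine slice $V_\epsilon$: since $\{g_1,\dots,g_r\}$ spans $\bar\OO_{X,p}/I_{\tilde f}$, and more importantly since for $r$ large the $g_i$ together with $\tilde f$ generate enough $1$-jets at every point of $X\setminus B_s$ — this needs a Serre-vanishing statement that $H^0(X,\LL^{\otimes r}) \to H^0(N_1(q),\LL^{\otimes r})$ is surjective uniformly in $q$, which one can arrange — the slice $V_\epsilon$ is transverse to the fibers of $\bar E \to \PP H^0$ along $E_{V_\epsilon}$, hence $\pi$ is a submersion there.

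The subtlety, and the reason $\epsilon$ must be taken small, is a compactness-of-the-zero-locus argument: I only know $d\tilde f \ne 0$ on the zero locus of $\tilde f$ outside $p$, and I need that $h(q) = 0,\ q\notin B_s$ still forces $dh(q)\ne 0$ for all $h \in V_\epsilon$. This is where I would argue by contradiction: if no such $\epsilon$ worked, there would be sequences $t^{(n)} \to 0$ and $q_n \in X \setminus B_s$ with $(\tilde f + \sum t_i^{(n)} g_i)(q_n) = 0$ and $d(\tilde f + \sum t_i^{(n)} g_i)(q_n) = 0$; by compactness of $X\setminus B_s$ pass to a convergent subsequence $q_n \to q_\infty \in X\setminus B_s$, and in the limit get $\tilde f(q_\infty) = 0$, $d\tilde f(q_\infty) = 0$, contradicting Proposition \ref{embedding}(1). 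I expect this limiting/compactness step to be the main obstacle to state cleanly — in particular one must be careful that "$q_\infty \notin B_s$" (it lies in the closed shell $X\setminus B_s$, which is fine) and that the trivializations used to make sense of $dh(q)$ vary continuously, but this is routine since $X\setminus B_s$ is covered by finitely many trivializing charts.

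Finally, given that $\pi$ is a proper submersion, Ehresmann's fibration theorem gives that it is a locally trivial $C^\infty$ fiber bundle, and since the base $V_\epsilon$ (an $\epsilon$-ball in $\CC^r$) is contractible, every fiber bundle over it is trivial, giving the last sentence of the statement. I would remark that the fiber is the complement in the nonsingular curve $\{\tilde f = 0\}$ of the piece inside $B_s$, i.e. a compact surface with boundary, which is exactly the Milnor-fiber-complement picture that feeds into the proof of Lemma \ref{large}(2).
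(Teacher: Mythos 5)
Your overall strategy is the paper's: properness is immediate from compactness of $X \setminus B_s$, and the real content is that for $\epsilon$ small enough every $h \in V_\epsilon$ satisfies $dh(q) \neq 0$ at every zero $q \notin B_s$, which you prove --- exactly as the paper does --- by a continuity/compactness argument from the fact that $(\tilde f, d\tilde f)$ is nonvanishing on the compact set $X \setminus B_s$. That part is correct and complete, as are the properness and Ehresmann steps.

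The middle paragraph, however, is both unnecessary and contains an unjustified claim. You assert that ``for $r$ large the $g_i$ together with $\tilde f$ generate enough $1$-jets at every point of $X\setminus B_s$'' and use this to get transversality of the slice $V_\epsilon$ to the universal incidence variety. But $g_1,\dots,g_r$ are a fixed finite collection chosen only so that their images span $\bar \OO_{X,p}/I_{\tilde f}$ --- a condition at the single point $p$ --- and nothing in Proposition \ref{embedding} controls their $1$-jets at other points $q$; Serre vanishing gives jet-surjectivity for the full space $H^0(X,\LL^{\otimes r})$, not for the span of the $g_i$. If your argument genuinely needed this input it would have a gap. Fortunately it does not: once $dh(q)\neq 0$ at a point $(q,h)\in E_{V_\epsilon}$, the defining equation $F(q,t)=(\tilde f+\sum t_i g_i)(q)=0$ already has surjective differential in the $X$-direction alone, so $E_{V_\epsilon}$ is smooth there and $\ker dF$ surjects onto $T_t\DD_\epsilon$ (given $v$, solve $d_X h(q)(u)=-\sum v_i g_i(q)$ for $u\in T_qX$). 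Hence the submersion property follows from your compactness step with no transversality-of-the-slice input, and with that paragraph deleted (or replaced by this one-line observation) your proof coincides with the paper's.
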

\begin{proof}
    Since $\pi$ is obviously proper, it suffices to prove that it is a submersion. Given $f + \sum t_i g_i \in V_{\epsilon}$, we have a section of the bundle $ (\OO_X \oplus T^* X) \oplus \LL^{\otimes r}$ given by  $(f + \sum t_i g_i, df + \sum t_i dg_i)$. It suffices to prove that for $\epsilon$ small enough this section is non vanishing outside $B_s$. This follows by continuity and the fact that $(f,df)$ is nonvanishing outside $B$.
\end{proof}
Propositions \ref{embedding} and \ref{subm} are important because of the description of the versal deformation space given in \cite{DH}, essentially $V_{\epsilon}$ is a versal deformation space of $f$.

 Let $V^{\circ}_{\epsilon} = V \cap U(X, \LL^{\otimes r})$.  Let $C$ be the zero locus of some $h \in V^{\circ}_{\epsilon}$. We have a monodromy homomorphism $\rho: \pi_1(V^{\circ}_{\epsilon}, h) \to \Mod(C).$
\begin{prop} \label{Vmon}
    The monodromy homomorphism $ \rho: \pi_1(V^0_{\epsilon}, h) \to \Mod(C)$ factorises as $$ \pi_1(V^{\circ}_{\epsilon}, h)  \to \Mod(C \cap  \bar B_s , C \cap \partial B_s) \to \Mod(C). $$ Here $\Mod(C \cap  \bar B_s , C \cap \partial B_s)$ denotes the mapping class group of the surface with boundary $C \cap  \bar B_s$ that is the identity map on the boundary. The homomorphism $\Mod(C \cap  \bar B_s , C \cap \partial B_s) \to \Mod(C)$ corresponds to extending by the identity.
\end{prop}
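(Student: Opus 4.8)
The plan is to exploit the fact, recorded in Proposition \ref{subm}, that over $V_\epsilon$ the total space $E_{V_\epsilon}$ of curves-minus-a-ball is a trivial $C^\infty$ fiber bundle, so that the family of curves $C_h$ for $h \in V_\epsilon^\circ$ is, away from the ball $B_s$, completely untwisted. Concretely, I would first fix a trivialization $\Phi\colon E_{V_\epsilon} \xrightarrow{\ \sim\ } (C \setminus B_s) \times V_\epsilon$ of the bundle from Proposition \ref{subm}; restricting attention to the region near $\partial B_s$, this gives a canonical identification of $C_h \cap \partial B_s$ with $C \cap \partial B_s$ and, in a collar, of $C_h$ outside $B_s$ with $C$ outside $B_s$, all varying continuously (indeed smoothly) in $h$. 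The point is that $C_h = (C_h \cap \bar B_s) \cup (C_h \setminus B_s)$, with the two pieces glued along $C_h \cap \partial B_s$, and the second piece together with the gluing is being held fixed by $\Phi$ as $h$ varies.

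Next I would construct the factoring map $\pi_1(V_\epsilon^\circ, h) \to \Mod(C \cap \bar B_s, C \cap \partial B_s)$. Given a loop $\gamma$ in $V_\epsilon^\circ$ based at $h$, parallel transport around $\gamma$ produces a self-diffeomorphism $\psi_\gamma$ of $C = C_h$; using the trivialization $\Phi$, I can choose the parallel-transport diffeomorphisms so that $\psi_\gamma$ is supported in (a slight enlargement of) $C \cap \bar B_s$ — this is possible because the bundle $E_{V_\epsilon} \to V_\epsilon$ is trivialized outside $B_s$, so the connection (Ehresmann connection) used to define parallel transport can be taken to be the trivial one there, making the transport the identity outside $B_s$. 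Thus $\psi_\gamma$ restricts to a mapping class of the surface-with-boundary $C \cap \bar B_s$ fixing $C \cap \partial B_s$ pointwise, and the assignment $\gamma \mapsto [\psi_\gamma]$ is a well-defined homomorphism because homotopic loops give isotopic transport maps (the isotopy again supported in $C\cap\bar B_s$ by the same argument). Composing with the standard "extend by the identity" map $\Mod(C \cap \bar B_s, C \cap \partial B_s) \to \Mod(C)$ recovers $\rho(\gamma) = [\psi_\gamma]$ as a mapping class of the closed surface $C$, which is exactly the claimed factorization.

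The main obstacle — really the only nontrivial point — is the compatibility of the two connections along $\partial B_s$: one must check that the Ehresmann connection on $E(X,\LL^{\otimes r})$ (or on the restricted family over $V_\epsilon^\circ$) used to define $\rho$ can be chosen to agree, near and outside $B_s$, with the product connection coming from the trivialization $\Phi$ of Proposition \ref{subm}. This is a routine partition-of-unity patching argument — any two Ehresmann connections differ by a vertical-valued $1$-form, and one interpolates between the product connection on a neighborhood of $C \setminus B_s$ and an arbitrary connection near $C \cap \bar B_s$ — but it is the step that actually makes the parallel transport supported in the ball, and hence the step that makes the factorization literally true rather than merely true up to isotopy. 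Once this is in place, the well-definedness of the homomorphism to $\Mod(C\cap \bar B_s, C \cap \partial B_s)$ and the commutativity of the triangle are immediate from the naturality of parallel transport. I would remark that this is exactly the mechanism by which Proposition \ref{subm} feeds into the identification of $V_\epsilon$ with a versal deformation space of $f$ in the sense of \cite{DH}, and that, combined with part (2) of Lemma \ref{large}, it yields the inclusion $\phi(\Gamma_v) \subseteq \Gamma(X,\LL^{\otimes r})$ since the image of $\pi_1(V_\epsilon^\circ,h)$ in $\Mod(C\cap\bar B_s, C\cap\partial B_s)$ is precisely the versal monodromy group.
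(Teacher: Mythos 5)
Your proposal is correct and follows essentially the same route as the paper: both decompose the family over $V^{\circ}_{\epsilon}$ into the trivial bundle of Proposition \ref{subm} outside $B_s$ glued to the (possibly nontrivial) family inside $B_s$, and observe that the monodromy therefore factors through $\Mod(C \cap \bar B_s, C \cap \partial B_s)$. The paper states this in one line ("as is always the case when a bundle is formed by gluing a trivial bundle in this way"), whereas you supply the connection-patching detail that makes the parallel transport literally supported in $\bar B_s$; this is a welcome elaboration, not a different argument.
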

\begin{proof}
    The family $E(X, \LL^{\otimes r})|_{V^{\circ}_{\epsilon}} \to V^{\circ}_{\epsilon}$ is formed by gluing two bundles together, the trivial bundle $E_{V_{\epsilon}}|_{V^{\circ}_{\epsilon}}$ and the nontrivial bundle $\{(f,x) \in V^0_{\epsilon} \times{B_s}| f(x) =0\}.$ Let $$\rho_0:  \pi_1(V^{\circ}_{\epsilon}, h) \to \Mod(C \cap  \bar B_s , C \cap \partial B_s)$$ be the monodromy of the above bundle. Then the  monodromy homomorphism $\rho: \pi_1(V^\circ{}) \to \Mod(C)$ factorises through $\rho_0$ (as is always the case when a bundle is formed by gluing a trivial bundle in this way).

\end{proof}

\begin{prop}\label{versmon}
    The image of the monodromy homomorphism $\pi_1(V^0_{\epsilon}, h)  \to \Mod(C \cap  \bar B_s , C \cap \partial B_s)$ is the monodromy group of the versal deformation space of the plane curve singularity $f$.
\end{prop}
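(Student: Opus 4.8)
The plan is to identify the family $\pi\colon E_{V_\epsilon}|_{V^\circ_\epsilon}\to V^\circ_\epsilon$, restricted to the ball $B_s$, with the Milnor fibration of the versal deformation of $f$, and then to invoke the fact that the family $V_\epsilon\to\{\text{deformations}\}$ is versal. First I would recall precisely the setup from \cite{DH} and \cite{CuS}: the miniversal deformation of an isolated plane curve singularity $f$ is cut out inside a Milnor ball $B_s$ by the family $f+\sum_{i=1}^{\mu}t_ie_i$, where $e_1,\dots,e_\mu$ descend to a $\CC$-basis of $\CC[[x,y]]/I_f$, and its monodromy (over the complement of the discriminant) is by definition the versal deformation monodromy group $\Gamma_v\subseteq\Mod(C_0)$, where $C_0$ is the Milnor fiber. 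By Proposition \ref{embedding}(2), the functions $g_1,\dots,g_r$ restrict, in the chosen analytic trivialization of $\LL^{\otimes r}$ near $p$, to a spanning set of $\bar\OO_{X,p}/I_{\tilde f}$; after discarding the redundant ones we get exactly such a basis, so the local family $\{(\tilde f+\sum t_ig_i)|_{B_s}\}$ is (an analytic reparametrization of) the miniversal family of $f$.

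The second step is to make the comparison of monodromy groups precise. By Proposition \ref{subm} the family is trivialized outside $B_s$, and by Proposition \ref{Vmon} the monodromy of $E(X,\LL^{\otimes r})|_{V^\circ_\epsilon}$ factors through $\rho_0\colon\pi_1(V^\circ_\epsilon,h)\to\Mod(C\cap\bar B_s,C\cap\partial B_s)$, which is the monodromy of the sub-bundle $\{(f',x)\in V^\circ_\epsilon\times B_s\mid f'(x)=0\}$. This sub-bundle is literally the restriction to $V^\circ_\epsilon$ of the family of zero loci in $B_s$ of the members of $V_\epsilon$, i.e.\ the Milnor fibration of the family $V_\epsilon$. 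Now $V_\epsilon$, parametrized by $(t_1,\dots,t_r)\in\DD_\epsilon$, maps to the base of the miniversal deformation of $f$ by sending $(t_i)$ to the singularity type of $\tilde f+\sum t_ig_i$ near $p$; since the $g_i$ contain a basis of $\bar\OO_{X,p}/I_{\tilde f}$, the differential of this classifying map at $0$ is surjective (this is exactly the infinitesimal versality criterion, e.g.\ \cite{DH}). Hence this map is a submersion near $0$, so for $\epsilon$ small it is an open surjection onto a neighborhood of the origin in the miniversal base, and it carries $V^\circ_\epsilon$ onto the complement of the discriminant. Pulling back a fibration along a surjective submersion induces a surjection on $\pi_1$ (up to passing to path components, which are irrelevant here since the base is smooth and connected), so the image of $\rho_0$ equals the image of the monodromy of the miniversal family, which is $\Gamma_v$ by definition.

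A few technical points need care. One must check that shrinking $\epsilon$ (and choosing $s$ to be a Milnor radius) does not change $\Gamma_v$ — this is the standard fact that the Milnor fibration and its monodromy are independent of the chosen ball and of the size of the deformation disk, once both are small enough, which is part of the foundational material on versal deformations in \cite{DH}. One must also confirm that the identification $\bar\OO_{X,p}\cong\CC[[x,y]]$ together with the analytic trivializations of $\LL^{\otimes r}$ and $T^*X$ near $p$ genuinely converts $\tilde f+\sum t_ig_i|_{B_s}$ into a deformation of $f$ in the analytic (not just formal) category; this is why Proposition \ref{embedding} is stated with $I_{\tilde f}=I_f$ and with analytic local parameters, so that the germ of $\tilde f$ at $p$ is right-equivalent to $f$. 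I expect the main obstacle to be the bookkeeping in the claim that the classifying map $\DD_\epsilon\to(\text{miniversal base})$ is a submersion at the origin and that $V^\circ_\epsilon$ is exactly the preimage of the discriminant complement: one has to match the ``section plus basis of the Tjurina algebra'' description of Proposition \ref{embedding} with the Kodaira--Spencer description of versality in \cite{DH}, and then argue that the discriminant pulls back to $V_\epsilon\setminus V^\circ_\epsilon$ using that a member of $V_\epsilon$ is nodal away from $B_s$ never (Proposition \ref{subm}) and inside $B_s$ exactly when its germ at $p$ is singular. Everything else is a formal consequence of naturality of monodromy under pullback.
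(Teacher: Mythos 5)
Your proposal is correct and takes essentially the same route as the paper: the paper's entire proof is the assertion that $V_\epsilon$ is a versal deformation space of $f$ (citing Section 3 of \cite{DH}), and you simply fill in the details it delegates to that reference — infinitesimal versality via the basis of $\bar\OO_{X,p}/I_{\tilde f}$ supplied by Proposition \ref{embedding}(2), and the identification of the monodromy of a versal family with that of the miniversal one via the classifying submersion. One small caution: the blanket principle that ``pulling back a fibration along a surjective submersion induces a surjection on $\pi_1$'' is false in general (covering maps are surjective submersions); what actually closes the argument is the rank-theorem normal form, under which $V^{\circ}_\epsilon$ becomes a product of the miniversal discriminant complement with a disk, so the classifying map induces an isomorphism on $\pi_1$ of the discriminant complements.
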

\begin{proof}
    The space $V_{\epsilon}$ is a versal deformation space of $f$ (see for example section 3 of \cite{DH}). Since it is a versal deformation space its monodromy group is the monodromy group of the versal deformation space of the plane curve singularity $f$.
\end{proof}
 We are now in a position to prove part (2) of Lemma \ref{large}
 \begin{proof}[Proof of part(2) of Lemma \ref{large}]
     Part (2) of Lemma \ref{large} follows immediately from 
    Propositions \ref{Vmon} and \ref{versmon}. We note that the Milnor fiber $C_0$ of Lemma \ref{large} is just $C \cap \bar B_s$ of Propositions\ref{Vmon} and \ref{versmon}.
 \end{proof}

\section{The subgroups $\Ac(V)$ and Johnson homomorphisms}
In this section we will discuss the groups $\Ac(V)$ and their image under Johnson-Morita homomorphisms.
Let $C$ be a Riemann surface. Let $H = H^1(C, \QQ)$.
Let $V \subseteq H$ be a symplectic subspace, and $V^{\perp}$ its perpendicular subspace. Let $\rho: \Mod(C) \to Sp(H)$ denote the representation of $\Mod(C)$ arising from the action of mapping classes on cohomology. Since $V \subseteq H$, $Sp(V^{\perp}) \subseteq Sp(H)$

Let $\II(V) = \rho^{-1}(Sp(V^{\perp}))$. Let $\Ac(V)$ be the normal subgroup of $\II(V)$ generated by a single  Dehn twist  about a nonseparating curve $\alpha$, with $[\alpha] \in V^{\perp}$.

\begin{prop}
$\Ac(V)$ is generated by the set of $T_a$ where $a$ is a nonseparating curve, with $[a] \in V^{\perp}$.
\end{prop}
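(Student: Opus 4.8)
The statement asserts that the subgroup $\Ac(V)$, which is \emph{defined} as the normal subgroup of $\II(V)$ generated by one Dehn twist $T_\alpha$ about a nonseparating curve with $[\alpha]\in V^\perp$, actually coincides with the subgroup generated by \emph{all} such Dehn twists. One inclusion is immediate: the subgroup generated by all $T_a$ with $a$ nonseparating and $[a]\in V^\perp$ is contained in $\II(V)$ (since the action of such a twist on $H$ is a transvection fixing $V$ pointwise, hence lies in $Sp(V^\perp)$), and it contains the generating twist $T_\alpha$; moreover it is normal in $\II(V)$ because conjugating $T_a$ by any $\phi\in\II(V)$ gives $T_{\phi(a)}$, and $\phi(a)$ is again nonseparating with $[\phi(a)]=\phi_*[a]\in V^\perp$ since $\phi_*$ restricted to $V^\perp$ stays in $V^\perp$ (it acts as an element of $Sp(V^\perp)$ and fixes $V$). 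Hence this subgroup is one candidate for ``the normal subgroup generated by $T_\alpha$'', so it contains $\Ac(V)$... no wait, that gives the wrong direction. Let me restate: call $\Ac(V)$ the normal closure of $\langle T_\alpha\rangle$ in $\II(V)$, and call $B$ the subgroup generated by all $T_a$ with $a$ nonseparating, $[a]\in V^\perp$. The containment $\Ac(V)\subseteq B$ is the easy one: $B$ is normal in $\II(V)$ and contains $T_\alpha$, so it contains the normal closure. The content of the proposition is the reverse containment $B\subseteq\Ac(V)$, i.e. every $T_a$ with $a$ nonseparating and $[a]\in V^\perp$ lies in the normal closure of $T_\alpha$.

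The plan is to prove $B\subseteq \Ac(V)$ by showing that any two such Dehn twists are conjugate within $\II(V)$. Concretely, given a nonseparating curve $a$ with $[a]\in V^\perp$, I want to produce $\phi\in\II(V)$ with $\phi(\alpha)=a$; then $T_a=T_{\phi(\alpha)}=\phi T_\alpha \phi^{-1}\in\Ac(V)$, and since the $T_a$ generate $B$ we are done. To build such a $\phi$, first use the change-of-coordinates principle for the mapping class group: since $\alpha$ and $a$ are both nonseparating, there is some $\psi\in\Mod(C)$ with $\psi(\alpha)=a$. This $\psi$ need not lie in $\II(V)$, so the real work is to correct it. The key point is that $V$ is a symplectic subspace, so $H=V\oplus V^\perp$ as symplectic vector spaces; choose a geometric symplectic basis of $C$ adapted to this splitting, with $V$ spanned by some of the standard $a_i,b_i$ and $V^\perp$ by the rest, and with $\alpha$ isotopic to one of the $V^\perp$-basis curves. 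The subgroup of $\Mod(C)$ generated by Dehn twists about $V^\perp$-basis curves and chain curves among them surjects onto $Sp(V^\perp,\ZZ)$ (the standard generation of the symplectic group by transvections), and all these twists lie in $\II(V)$. So after composing $\psi$ with a suitable product of $V^\perp$-twists I can arrange that the induced map on $H$ lies in $Sp(V^\perp)$, i.e. the corrected map lies in $\II(V)$; I must also check the correction can be chosen to still send $\alpha$ to $a$, which follows because the twists used in the correction can be supported away from $a$ once $a$ is part of the chosen basis, or alternatively because the Torelli group acts transitively enough on nonseparating curves in a fixed homology class.

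More carefully, here is the order of steps. (1) Easy inclusion: $\Ac(V)\subseteq B$, as above, using that $B$ is $\II(V)$-normal and contains the defining generator. (2) Reduce the reverse inclusion to: for every nonseparating $a$ with $[a]\in V^\perp$ there exists $\phi\in\II(V)$ with $\phi(\alpha)=a$. (3) Handle the homological step first: show $Sp(V^\perp,\ZZ)$ acts transitively on primitive vectors of $V^\perp$ (standard), and that for any primitive $v\in V^\perp$ there is a nonseparating curve carrying $v$; combined with the change-of-coordinates principle, reduce to the case $[a]=[\alpha]$ in $H$. (4) Now $a$ and $\alpha$ are homologous nonseparating curves; invoke the fact (Putman, or Farb--Margalit's discussion of the Torelli group) that the Torelli group $\II\le\Mod(C)$ acts transitively on the set of nonseparating simple closed curves in a fixed primitive homology class, and note $\II\subseteq\II(V)$. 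This yields $\phi\in\II(V)$ with $\phi(\alpha)=a$. (5) Conclude $T_a\in\Ac(V)$ for all such $a$, hence $B\subseteq\Ac(V)$, hence equality.

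The main obstacle I anticipate is step (4): one must be sure that two homologous nonseparating curves are related by an element of the Torelli group (not merely by an element acting trivially on the relevant sub-lattice), and that the relevant transitivity statement is available in the form needed. This is a known result, but it needs to be cited precisely, and one should double-check the primitivity hypothesis on $[\alpha]$ in $V^\perp$ (if $[\alpha]$ is a primitive class in $H$ it is automatically primitive in $V^\perp$ given the splitting). A secondary subtlety is making sure the ``change of coordinates'' element in step (3) can be taken inside $\II(V)$ after the homological correction — but since any symplectic automorphism of $V^\perp$ extending by the identity on $V$ is realized by a mapping class in $\II(V)$, and Torelli handles the rest, this goes through.
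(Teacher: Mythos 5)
Your proposal is correct and follows essentially the same route as the paper: reduce to showing $\II(V)$ acts transitively on nonseparating curves with class in $V^\perp$, achieve the homological matching by lifting an element of $Sp(V^\perp)$ through the surjection $\II(V)\to Sp(V^\perp)$, and finish with Johnson's theorem that the Torelli group acts transitively on homologous nonseparating simple closed curves. The only difference is that you spell out the easy normality/containment bookkeeping that the paper leaves implicit.
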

\begin{proof}
    The Proposition immediately follows from the following claim.
    \textbf{Claim:} $\II(V)$ acts transitively on the set of simple closed curves $a$, satisfying $[a] \in V^{\perp}$. 
    
    We will now establish this claim.
    
    Let $a, b$ be non- separating simple closed curves with $[a],[b] \in V^{\perp}.$ Then there exists $g \in Sp(V^{\perp})$ such that $g([a]) = [b]$. Since $\II(V) \to Sp(V^{\perp})$ is surjective, we may find $\tilde g \in \II(V)$ lifting  $g$. Then $\tilde g (a)$ and $b$ are homologous. But by a well known theorem of Johnson
    \cite{Jo} there is an element $h$ in the Torelli group such that $h \tilde g(a) = b$. But $h \tilde g \in \II (V)$. This establishes the claim and hence the Proposition.
\end{proof}

In what follows we will discuss the Malcev completion of groups and the Johnson filtration of the mapping class group. We will be fairly brief, the interested reader is advised to consult \cite{HaJ} and\cite{Jo1} for more details.

Given a group $G$, we will call the $k$th term of its lower central series by $G_k$, with $G = G_0$. We will denote $G/ G_{k}$ by $N_k(G)$ 
Consider the  group $N_k(\pi_1(C))$. It is a nilpotent group and as a result is embedded in a $\QQ$ algebraic nilpotent group called its Malcev completion, which we will denote $L_k(C)$. Let $\Lf_k(C)$ denote the Lie algebra of $L_k(C)$. The functoriality of passing to Malcev completions and taking Lie algebras means that we have a representation $\Mod(C) \to Out(\Lf_k(C))$. Note that $\Lf_k(C)$ is the quotient of the free graded $k$-nilpotent Lie algebra on $H$ by the ideal generated by the symplectic form. 

We recall that the $k^{th}$ term of the Johnson filtration on $\Mod(C)$, $J_k$ is defined by $J_k: = \ker(\Mod(C) \to Out(\Lf_k(C)) )$. 

Let $\Lf_k(V^\perp)$ denote the the free graded $k$ nilpotent Lie algebra on $V^{\perp}$. Let $Aut^s(\Lf_k(V^{\perp}))$ denote the automorphism group of $\Lf_k(V^\perp)$ that fixes the symplectic form $\omega|_{V^{\perp}}$.
 
 Let $\Lf_k(H)$ denote the the free graded $k$ nilpotent Lie algebra on $H$. Let $Aut^s(\Lf_k(H))$ denote the automorphism group of $\Lf_k(H)$ that fixes the symplectic form $\omega$.
There is a homomorphism $\phi: Aut^s(\Lf_k(V^{\perp})) \to Aut^s(\Lf_k(H))$
defined as follows: Since $\Lf_k(H)$ is a free graded nilpotent Lie algebra, to define a map it suffices to define where the generators go. We  define $\phi(g)(h) = h$ for $h \in V$ and $\phi(g)(h)=  g(h)$
 for $h \in V^{\perp}$.

There is a homomorphism $\psi: Aut^s(\Lf_k(H)) \to Aut(\Lf_k(C))$ defined as follows: Since $\Lf_k(C)$ is a quotient of $\Lf_k(H)$ by the ideal generated by $\omega$, we may define $\psi(g)([a]) = [g(a)]$, where $a \in \Lf_k(H) $, $[a]$ is its equivalence class in the quotient. This is well defined as $g(\omega) = \omega$.

Let $C_0\subseteq C$ denote a subsurface of $C$ with 1 boundary component such that the image of $H_1(C_0)$ in $H_1(C)$ is identified with $V^{\perp}$ under Poincare duality. 

\begin{prop}\label{indofch}
    Let $C_0, C_0' \subseteq C$ denote two connected subsurfaces, both with 1 boundary component such that the image of $H_1(C_0)$  and $H^1(C_0')$in $H_1(C)$ is identified with $V^{\perp}$ under Poincare duality. Then there is a diffeomorphism of $C$ taking $C_0$ to $C_0'$, and $\phi$ may be chosen to lie in the Torelli group.
\end{prop}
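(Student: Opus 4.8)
The plan is to reduce the statement to a purely topological "change of coordinates" fact about subsurfaces with a single boundary component whose relative first homology realizes a fixed symplectic summand. First I would recall the change-of-coordinates principle for surfaces (in the spirit of Farb--Margalit): a connected subsurface $S \subseteq C$ with one boundary component is determined up to the action of $\Mod(C)$ by its genus, equivalently by the rank of the image of $H_1(S) \to H_1(C)$ together with the isomorphism type (here, symplectic type) of the induced pairing on that image. Since both $C_0$ and $C_0'$ have one boundary component and their images in $H_1(C)$ are both identified with $V^\perp$ (a fixed symplectic summand), they have the same genus $\tfrac12\dim V^\perp$, and the same number ($1$) of boundary components, so there is a diffeomorphism $f$ of $C$ with $f(C_0) = C_0'$. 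This handles the existence of $\phi$; the remaining and real content is to arrange $\phi$ to lie in the Torelli group.

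To promote $f$ to a Torelli element, I would analyze the induced map $f_* \in Sp(H_1(C))$. Fix a symplectic basis of $H_1(C)$ adapted to the splitting $H_1(C) = V^\vee \oplus (V^\perp)^\vee$ (Poincaré dual to $V \oplus V^\perp$) in which the first $\dim V^\perp$ basis vectors span the image of $H_1(C_0)$. Because both $C_0$ and $C_0'$ have the \emph{same} image $V^\perp$ in $H_1(C)$ and one boundary curve (which is null-homologous, being the boundary of the complementary subsurface as well), the automorphism $f_*$ preserves the subspace $(V^\perp)^\vee$ and its symplectic complement setwise; moreover on the complement $V^\vee$ it can be taken to be anything we like by modifying $f$ by a diffeomorphism supported on $C \setminus C_0'$. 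Concretely, I would first correct $f$ by an element of $\Mod(C)$ supported away from $C_0'$ so that $f_*$ restricted to $V^\vee$ is the identity; then $f_*$ is block-diagonal, identity on $V^\vee$ and some $g \in Sp((V^\perp)^\vee) = Sp(V^\perp)$ on the other block. Now $g$ maps the ordered set of standard homology generators carried by $C_0$ to another such set carried by $C_0$ (same subsurface up to isotopy after the first correction), so by Johnson's theorem that the Torelli group acts transitively on such configurations of curves inside a subsurface --- equivalently, that $\Mod(C_0, \partial C_0) \to Sp(V^\perp)$ is surjective and its kernel is contained in the Torelli group of $C$ --- I can find $h \in \Mod(C_0, \partial C_0)$ with $h_* = g^{-1}$ on $(V^\perp)^\vee$ and $h_* = \mathrm{id}$ on $V^\vee$ (since $h$ is supported in $C_0$, it acts trivially on classes coming from the complement). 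Replacing $f$ by $f \circ h$ gives a diffeomorphism still taking $C_0$ to $C_0'$ and now acting trivially on all of $H_1(C)$, i.e. lying in the Torelli group, as desired.

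The main obstacle I expect is bookkeeping the interaction between $C_0$ and its complement: one must be careful that a homology class in $H_1(C)$ carried by $C_0$ is \emph{also} potentially carried by the complementary subsurface (they intersect along the common boundary, which is separating), so the splitting $H_1(C) = V^\vee \oplus (V^\perp)^\vee$ used above really is a direct sum of symplectic summands and the "correct away from $C_0'$" and "correct inside $C_0$" steps do not interfere. This is precisely where the hypothesis that each of $C_0, C_0'$ has exactly one boundary component is used: it guarantees the complement $C \setminus C_0$ is connected with one boundary component and that $H_1(C_0) \oplus H_1(C \setminus C_0) \to H_1(C)$ realizes the symplectic splitting $V^\perp \oplus V$, so the two correction steps are supported on complementary pieces and commute at the level of homology. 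Once that is set up, the two invocations of Johnson's theorem (surjectivity of $\Mod(S, \partial S) \to Sp(H_1(S))$ for a one-boundary-component surface, and the resulting freedom to adjust by Torelli) finish the argument.
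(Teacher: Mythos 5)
Your proposal is correct and takes essentially the same route as the paper: the paper likewise first produces a diffeomorphism taking $C_0$ to $C_0'$ via the classification of surfaces, then corrects its action on homology by gluing diffeomorphisms supported on $C_0$ and on $C \setminus C_0$ realizing the inverse symplectic blocks on $V^{\perp}$ and $V$ respectively. The only cosmetic difference is that you phrase the correction as two successive post/pre-compositions rather than one glued map, and you attribute the surjectivity of $\Mod(S,\partial S) \to Sp(H_1(S))$ to Johnson where the paper simply asserts the existence of the correcting diffeomorphisms.
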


\begin{proof}
    By the classification of surfaces, one has a self map $C \to C$ that takes $C_0$ to $C_0'$. However as constructed $\phi$ may not lie in the Torelli group. We note that the map on homology induced by $\phi$ preserves $V$ and $V^{\perp}$. Let $\phi_*|_{V} = A_1$ and $\phi_*|{V^{\perp}} = A_2$. Let $(\psi_1, \psi_2)$ be diffeomorphisms of $C_0$ and $C \setminus C_0$ that are the identity on the boundary and whose induced maps on homology are $A_1^{-1}$ and $A_2^{-1}$. Let $\psi$ be the self diffeomorphism of $C$ formed by gluing $\psi_1$ and $\psi_2$. Then one observes that $\phi \circ \psi$ is in the Torelli group and restricts to a diffeomorphism between $C_0$ and $C_0'$.

\end{proof}
Let us fix a point $*$ on the boundary of $C_0$. We have a commutative diagram:
\[
\begin{tikzcd}
    J_k(C_0) \arrow{r} \arrow{d} & \Mod(C, *) \arrow{r} \arrow{d} & \Mod(C) \arrow{d}\\
    \Aut^s (\Lf_k(V^{\perp})) \arrow{r} & \Aut^s(\Lf_k(H))  \arrow{r}& \Out(\Lf_k(C))
\end{tikzcd}
\]

\begin{prop}\label{genbyC0}

       The image of the composite map from $\Mod(C_0) \to Out(\Lf_k(C))$ does not depend on the choice of $C_0$.
    
       The image of $\Ac(V)$ in $Out(\Lf_k(C))$ is precisely the image of the composite map from $\Mod(C_0) \to Out(\Lf_k(C))$.

\end{prop}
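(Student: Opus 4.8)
The plan is to prove the two assertions of Proposition \ref{genbyC0} in sequence, using Proposition \ref{indofch} for the independence statement and the commutative diagram above together with Proposition \ref{inker} (or rather the transitivity statement proved just above) for the identification with $\Ac(V)$.

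First I would handle the independence of the image of $\Mod(C_0) \to \Out(\Lf_k(C))$ on the choice of $C_0$. Given two subsurfaces $C_0, C_0' \subseteq C$ as in the statement, Proposition \ref{indofch} supplies a diffeomorphism $\phi$ of $C$ carrying $C_0$ to $C_0'$ that lies in the Torelli group. Since $\phi$ is Torelli, its image in $\Out(\Lf_k(C))$ is trivial (Torelli is exactly $J_1 \supseteq$ nothing — more precisely, acting trivially on $H$ does not immediately give triviality in $\Out(\Lf_k(C))$ for $k>1$), so here I need to be slightly more careful: what I actually want is that conjugation by $\phi$ identifies the subgroup $\Mod(C_0)$ with $\Mod(C_0')$ inside $\Mod(C)$, hence identifies their images in $\Out(\Lf_k(C))$ \emph{as subgroups} up to conjugation by the image of $\phi$. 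To get equality of images on the nose, I would instead argue that any element of $\Mod(C_0')$ is $\phi g \phi^{-1}$ for $g \in \Mod(C_0)$, and since the image of $\Mod(C_0)$ in $\Out(\Lf_k(C))$ is already a subgroup, conjugating it by the (fixed) image of $\phi$ gives a conjugate subgroup; one then observes the image of $\Mod(C_0)$ is normalized by enough of $\Mod(C)$ — or, more cleanly, one uses that the image depends only on the homology data $V^\perp$ via the outer-automorphism picture, so the $\phi$-conjugation is absorbed. This is the bookkeeping I expect to be the main subtlety.

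Next, for the identification with the image of $\Ac(V)$: the containment $\supseteq$ of $\Mod(C_0)$'s image in $\Ac(V)$'s image is the easy direction — a Dehn twist about a nonseparating curve $a$ with $[a] \in V^\perp$ can, by the transitivity of $\II(V)$ on such curves proven above, be conjugated into $C_0$ (since $C_0$ contains such curves and carries all of $V^\perp$), and the conjugating element acts on $\Lf_k(C)$ in a way compatible with the diagram; running over a generating set of $\Ac(V)$ by such twists (Proposition just above) gives that the image of $\Ac(V)$ is contained in the conjugacy-closure of the image of $\Mod(C_0)$, which by the first part is the image of $\Mod(C_0)$ itself. For the reverse containment $\subseteq$, I would show every element of $\Mod(C_0)$ maps into the image of $\Ac(V)$: any element of $\Mod(C_0, *)$ acts on $\pi_1(C)$ fixing (up to conjugacy) the based loops supported away from $C_0$, hence preserves the quotient data defining $V$, so its image lies in the subgroup of $\Out(\Lf_k(C))$ that $\Ac(V)$ surjects onto; combined with the commutative square (the left square) and the fact that $\Mod(C_0)$ is generated by Dehn twists about nonseparating curves in $C_0$, each such twist has homology class in $V^\perp$ and hence lies in $\Ac(V)$, giving the containment directly on generators.

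The hard part will be the first paragraph's conjugation bookkeeping — making "independent of the choice of $C_0$" into an honest equality of subgroups of $\Out(\Lf_k(C))$ rather than merely equality up to conjugacy — and making precise the claim that conjugating a Dehn twist $T_a$ with $[a]\in V^\perp$ by an element of $\II(V)$ (which need not lie in $\Mod(C_0)$ or even fix $C_0$) still lands in the image controlled by $C_0$; here one leans on the fact that the relevant images in $\Out(\Lf_k(C))$ are cut out by the homological/nilpotent conditions defining $V$ and $V^\perp$, which are $\II(V)$-invariant, so the transitivity argument and the diagram together close the loop.
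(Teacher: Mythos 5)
Your proposal follows essentially the same route as the paper: Proposition \ref{indofch} for the independence of the choice of $C_0$, the observation that every nonseparating curve with class in $V^{\perp}$ lies in \emph{some} admissible $C_0$ (equivalently, can be moved into a fixed one) for the containment of the image of $\Ac(V)$, and generation of $\Mod(C_0)$ by Dehn twists about nonseparating curves with classes in $V^{\perp}$ for the reverse containment. The conjugation subtlety you flag --- that a Torelli diffeomorphism need not act trivially on $\Out(\Lf_k(C))$ for $k>1$, so a priori the two images are only conjugate subgroups --- is genuine, but the paper's own proof elides it entirely by calling the independence an ``immediate consequence'' of Proposition \ref{indofch}, so your account is no less complete than the paper's.
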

\begin{proof}
 The fact that the image of the composite is independent of the choice of $C_0$ is an immediate consequence of Proposition \ref{indofch} . 

 To establish that the two images are the same,  note that the image of $\Mod(C_0)$ in $\Mod(C)$ is contained in $\Ac[V]$ and that given a nonseparating simple closed curve  $a$ with $[a] \in V^{\perp}$, one can choose a $C_0$ satisfying the above properties with $a \in C_0$. Since all $C_0$ have the same image, we have that the image of $\Ac[V]$ is contained in the image of the composite map.

    %To prove \ part (1), it suffices to prove that for any nonseparating simple closed curve $a$ such that $[a] \in V^{\perp}$, $T_a$ is mapped into the image of the composite map. 
   % Given such an $a$, we may find a subsurface $C_0$ containing it with one boundary component such that the image of $H_1(C_0)$ in $H_1(C)$ is identified with $V^{\perp}$ under Poincare duality.
   % We then have a commutative diagram as follows:

   % Hence $T_a$ is in the image of the required map
   % To prove part (2), we note that given any $C_0 \subseteq C$ of the aforementioned form, the image of $A(V)$ contains the image of $Mod(C_0)$ under the composite map of the diagram.
  %  We now appeal to results of Patzt on representation stability of the Johnson filtration, which states that the image of this map is (CONFIRM THIS STUFF)
    
\end{proof}

\begin{cor}\label{imageind}
   Let $k \ge 2$. For $\dim V^{\perp} \ll k$, the group $\Ac(V) \cap J_k/ J_{k+1}$ is normally generated in $\II(V)$ by the image of $J_k(C_0)$.
\end{cor}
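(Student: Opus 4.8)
The plan is to exploit the graded structure of the Johnson filtration together with Proposition \ref{genbyC0}. The target $J_k(C)/J_{k+1}(C)$ (or rather the relevant piece of it) sits inside the degree-$k$ part of the Johnson–Morita tower, where it is controlled by $\Lf_k(H)$ and $\Lf_{k+1}(H)$; similarly the degree-$k$ contribution coming from $C_0$ is controlled by $\Lf_k(V^\perp)$. First I would recall that for $k\ge 2$ the group $\Ac(V)$ lies in the Torelli-type group $\II(V)$, and that by Proposition \ref{genbyC0} the image of $\Ac(V)$ in $\Out(\Lf_{k+1}(C))$ is the image of $\Mod(C_0)\to \Out(\Lf_{k+1}(C))$. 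Intersecting with $J_k$ and passing to the associated graded, this says that $\Ac(V)\cap J_k$ maps onto the degree-$k$ graded piece of the image of $\Mod(C_0)$, and that piece is exactly the image of $J_k(C_0)$ under the left vertical map in the commutative diagram (since the lower-degree Johnson–Morita invariants of elements of $\Mod(C_0)$ already vanish once we are in $\Ac(V)\cap J_k$). So the image of $J_k(C_0)$ and the image of $\Ac(V)\cap J_k$ coincide in $\Out(\Lf_{k+1}(C))/\Out(\Lf_k(C))$, which is what the statement asserts once translated through the identification $\Ac(V)\cap J_k/J_{k+1} \cong$ (its image in the degree-$k$ Morita piece), valid because $J_{k+1}$ is precisely the kernel.

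The second half of the argument is the normal-generation claim: I want not merely that the two images are equal as sets, but that $\Ac(V)\cap J_k/J_{k+1}$ is \emph{normally generated in $\II(V)$} by the image of $J_k(C_0)$. Here the key step is to use Proposition \ref{indofch}: any two choices of $C_0$ with image $V^\perp$ in homology are related by a Torelli (hence $\II(V)$-) element, so the $\II(V)$-conjugates of the image of $J_k(C_0)$ already include the degree-$k$ Morita images of $J_k(C_0')$ for \emph{every} admissible $C_0'$. Next, every nonseparating $a$ with $[a]\in V^\perp$ lies in some such $C_0'$, and $\Ac(V)$ is generated by the $T_a$ (the Proposition just before Prop. \ref{indofch}); combined with the standard fact that the Johnson filtration is preserved under conjugation, any element of $\Ac(V)\cap J_k$ is, modulo $J_{k+1}$, a product of commutators and conjugates of twist-powers supported in the various $C_0'$. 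Feeding this into the degree-$k$ Morita piece — where everything is linear and abelian — shows the whole piece is generated by the $\II(V)$-conjugates of the image of $J_k(C_0)$.

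The one genuinely technical point, and where the dimension hypothesis $\dim V^\perp \ll k$ is used, is the following: one must ensure that the passage ``image of $\Mod(C_0)$ in $\Out(\Lf_{k+1}(C))$, intersected with the degree-$k$ Morita piece, equals the image of $J_k(C_0)$'' does not pick up extra contributions from the failure of $J_\bullet(C_0)$ to surject onto the naive graded pieces of $\Aut^s(\Lf_k(V^\perp))$. In other words, one needs that no element of $\Mod(C_0)$ which is \emph{not} deep in the Johnson filtration of $C_0$ can nonetheless land in $J_k(C)$; equivalently, that the map $\Aut^s(\Lf_k(V^\perp))\to \Out(\Lf_k(C))$ in the bottom row of the diagram is injective enough in low degrees. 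For $\dim V^\perp$ small relative to $k$ the algebra $\Lf_k(V^\perp)$ is ``thin'' and one can check by a direct weight/degree count on free nilpotent Lie algebras that this map detects everything through degree $k$; I expect this degree bound to be the main obstacle, since for $\dim V^\perp$ comparable to or larger than $k$ there can be automorphisms of $\Lf_k(V^\perp)$ that become inner (or trivial) after embedding into $\Lf_k(C)$ and quotienting by $\omega$, breaking the clean identification. I would carry out this count last, after setting up the diagram chase, and state precisely the inequality on $\dim V^\perp$ and $k$ that it requires.
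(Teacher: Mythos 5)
Your approach is essentially the paper's: the paper deduces Corollary \ref{imageind} in one line from Proposition \ref{genbyC0} (applied at level $k+1$), and your first two paragraphs correctly supply the details of that deduction --- the identity $(\Ac(V)J_{k+1})\cap J_k=(\Ac(V)\cap J_k)J_{k+1}$ lets you pass from equality of images in $\Out(\Lf_{k+1}(C))$ to equality of images in $J_k/J_{k+1}$, and Proposition \ref{indofch} handles the independence of $C_0$ and the normal generation in $\II(V)$. The issue you isolate in your last paragraph --- that an element of $\Mod(C_0)$ could a priori lie in $J_k(C)$ without lying in $J_k(C_0)$, so that $(\mathrm{im}\,\Mod(C_0))\cap J_k(C)$ might strictly contain $\mathrm{im}\,J_k(C_0)$ --- is a genuine point that the paper's ``immediately follows'' passes over in silence; however, the standard way to close it is not a degree count on $\Lf_k(V^{\perp})$ hinging on $\dim V^{\perp}\ll k$, but rather the functoriality of the Johnson filtration under $\pi_1$-injective subsurface inclusions with one boundary component (as developed in \cite{CP}), which gives $\Mod(C_0)\cap J_k(C)=J_k(C_0)$ outright, so you should replace your proposed weight count with a citation of that compatibility result.
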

\begin{proof}
    This immediately follows from the previous Proposition.
\end{proof}

We will also need the following related Proposition.
\begin{prop}\label{AcTor}
    Let  $C$ be a Riemann surface, $\II$ denote its Torelli group. Let $V \subseteq H_1(C, \QQ)$ denote a symplectic subspace. Let $\phi : \II \to \Lambda^3 H_1(C, \ZZ) / H_1(C, \ZZ)$ denote the Johnson homomorphism. 
    Then $$\phi (\II \cap \Ac(V)) \subseteq \Lambda^3 V^{\perp} / V^{\perp}\subseteq  \Lambda^3 H_1(C, \ZZ) / H_1(C, \ZZ).$$
\end{prop}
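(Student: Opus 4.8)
The plan is to reduce the statement to the model case already prepared in the previous propositions, namely to a subsurface $C_0 \subseteq C$ carrying the class $V^\perp$, and to exploit naturality of the Johnson homomorphism under the inclusion $C_0 \hookrightarrow C$. First I would recall that by the proposition preceding this one (and by Proposition \ref{genbyC0}), $\Ac(V)$ is generated by Dehn twists $T_a$ about nonseparating curves $a$ with $[a] \in V^\perp$, and moreover that the image of $\Ac(V)$ in the relevant quotients agrees with the image of $\Mod(C_0)$, where $C_0$ is any connected one-boundary-component subsurface whose homology maps onto $V^\perp$. Since the Johnson homomorphism is a group homomorphism, it suffices to check the containment on a generating set of $\II \cap \Ac(V)$; and since any product of twists about curves in $V^\perp$ that lies in the Torelli group can, by Proposition \ref{genbyC0} together with Proposition \ref{indofch}, be pushed (up to Torelli-conjugation, under which $\Lambda^3 V^\perp/V^\perp$ is preserved because $V$ and $V^\perp$ are preserved) into the subsurface $C_0$, the whole question localizes to $C_0$.

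Next I would use naturality. The inclusion $j: C_0 \to C$ induces $j_*: \Mod(C_0) \to \Mod(C)$ and a corresponding map $H_1(C_0) \to H_1(C)$ with image (Poincaré dual to) $V^\perp$. The first Johnson homomorphism is natural for such subsurface inclusions in the sense that the square relating $\tau: \II(C_0) \to \Lambda^3 H_1(C_0)/H_1(C_0)$ and $\tau: \II(C) \to \Lambda^3 H_1(C)/H_1(C)$ commutes, with the right vertical map being $\Lambda^3$ of $H_1(C_0) \to H_1(C)$ modulo the corresponding map of $H_1$. Concretely, for a bounding-pair map or a separating twist supported in $C_0$, one computes $\tau$ inside $C_0$ and its image in $\Lambda^3 H_1(C)/H_1(C)$ lands in the subspace generated by $H_1(C_0)$-classes, i.e. exactly $\Lambda^3 V^\perp / V^\perp$. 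Since $\II(C_0)$ is generated by bounding pair maps (and twists about separating curves, which lie in $J_2$ and map to $0$), and since every element of $\II \cap \Ac(V)$ is, after a Torelli conjugation preserving $\Lambda^3 V^\perp/V^\perp$, a product of such elements supported in some $C_0$, the containment follows.

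I would carry out the steps in this order: (1) reduce to a generating set of $\II \cap \Ac(V)$ and invoke the earlier propositions to localize each generator, up to Torelli conjugation, into a fixed $C_0$; (2) record that Torelli conjugation preserves the subspace $\Lambda^3 V^\perp / V^\perp$, because conjugation acts on $\Lambda^3 H_1/H_1$ through $Sp(H_1)$ and any Torelli-conjugating element here can be taken to preserve the splitting $H_1 = V \oplus V^\perp$ (by Proposition \ref{indofch}); (3) prove the naturality square for $\tau$ under $C_0 \hookrightarrow C$, or just cite the well-known fact that $\tau$ of a bounding pair map $T_a T_b^{-1}$ with $a,b$ homologous and contained in $C_0$ equals $[a] \wedge (\text{something in } H_1(C_0))$, hence lies in $\Lambda^3 V^\perp$; (4) assemble.

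The main obstacle I anticipate is step (2), the bookkeeping around conjugation: the identity $\phi(\II \cap \Ac(V)) \subseteq \Lambda^3 V^\perp / V^\perp$ is only literally a statement about the image, so I must be careful that the reductions to $C_0$ are performed by conjugating elements that act on $H_1(C)$ preserving both $V$ and $V^\perp$ — otherwise the image subspace $\Lambda^3 V^\perp/V^\perp$ could be moved. Proposition \ref{indofch} gives exactly such a conjugating diffeomorphism (it can be chosen in the Torelli group, hence acting trivially on $H_1$, so a fortiori preserving $V$ and $V^\perp$), so this obstacle is manageable, but it is the point that requires the most care to state correctly. The naturality of $\tau$ in step (3) is standard (it is essentially the functoriality of the Johnson homomorphism under inclusions of subsurfaces with connected complement, or can be seen directly from Johnson's original formula), so I do not expect difficulty there beyond citing \cite{Jo} or \cite{Jo1}.
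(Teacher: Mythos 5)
Your proposal is correct and follows essentially the same route as the paper: reduce the image of $\II \cap \Ac(V)$ to the image of the Torelli/Johnson subgroup of a subsurface $C_0$ carrying $V^{\perp}$ (via Proposition \ref{genbyC0} and Corollary \ref{imageind}), then invoke naturality of the Johnson homomorphism under the inclusion $C_0 \hookrightarrow C$ to land in $\Lambda^3 V^{\perp}/V^{\perp}$. Your extra care in step (2) about conjugation preserving $V$ and $V^{\perp}$ is a point the paper leaves implicit, but it is the same argument.
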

    \begin{proof}
        Let $C_0\subseteq C$ be a subsurface with one boundary component such that the image of $H_1(C_0, \ZZ) \to H_1(C, \ZZ)$ is $V^{\perp}$. We note that $ \Lambda^3 H_1(C, \ZZ) / H_1(C, \ZZ)$ is isomorphic to $J_1(C) / J_2(C)$. As a result, by Corollary \ref{imageind} the image of $\Ac(V) \cap \II$ in  $ \Lambda^3 H_1(C, \ZZ) / H_1(C, \ZZ)$ agrees with that of $J_1(C_0)$.   However, the naturality of the Johnson homomorphism implies that the Johnson homomorphism $\phi: J_1(C_0) \to J_1(C)/ J_2(C)$ factorises as $$J_1(C_0) \to \Lambda^3 V^{\perp} \to \Lambda^3 H_1(C, \ZZ) / H_1(C, \ZZ), $$ where the first map is the Johnson homomorphism for $C_0$  and the latter map is the map induced by the inclusion $\Lambda^3 V^{\perp} \to \Lambda^3 H_1(C, \ZZ) $ followed by the quotient map.
    \end{proof}
\section{Surjectivity of monodromy groups into quotients}

The main purpose of this section is to prove the following Theorem.

\begin{thm}\label{thmtechsurj}
    Let $X, \LL$ be an algebraic surface, $\LL$ an ample line bundle. Let $r>0$. Let $C$ be a Riemann surface embedded in $X$ as a section of $\LL^{\otimes r}$. Let $i: C\to X$ denote the inclusion. Let $V = i^*(H^1(X, \QQ)).$ Let $k\ge 1$. Then for $r\gg k$, $\Gamma(X, \LL^{\otimes r}) \cap J_k / J_{k+1} $ is finite index in $A(V) \cap J_k / J_{k+1}$.

\end{thm}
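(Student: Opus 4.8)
The plan is to pass to the associated graded of the Johnson filtration and argue representation‑theoretically. Write $\Gamma=\Gamma(X,\LL^{\otimes r})$ and $H=H_1(C,\QQ)$, and recall that the $k$th Johnson--Morita homomorphism $\tau_k$ is injective on $J_k/J_{k+1}$, that $J_k/J_{k+1}$ is a finitely generated (torsion‑free) abelian group, and that $\tau_k$ is equivariant for the $Sp(H)$‑action induced by conjugation. For subgroups $A\subseteq B$ of the finitely generated abelian group $J_k/J_{k+1}$ one has $[B:A]<\infty$ if and only if $\tau_k(A)\otimes\QQ=\tau_k(B)\otimes\QQ$. By Theorem~\ref{maincontain} (via Proposition~\ref{gammainstab}), together with the fact that a nonseparating simple closed curve in $\ker i_*$ has homology class in $V^{\perp}$ (here $V$ is symplectic by Hard Lefschetz, and $V^{\perp}=\ker(i_*\colon H\to H_1(X,\QQ))$ under Poincar\'e duality), we have $\Gamma\subseteq\Ac(V)$, hence $\tau_k(\Gamma\cap J_k)\subseteq\tau_k(\Ac(V)\cap J_k)$; the task is the reverse inclusion after $\otimes\QQ$.

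First I would observe that $M_\Gamma:=\tau_k(\Gamma\cap J_k)\otimes\QQ$ is an $Sp(V^{\perp},\QQ)$‑submodule: the image of $\Gamma$ in $Sp(H)$ lies in $Sp(V^{\perp})$ since $\Gamma\subseteq\II(V)$, it is finite index in $Sp(V^{\perp},\ZZ)$ by Lemma~\ref{large}(1) hence Zariski dense in $Sp(V^{\perp},\QQ)$, and $\tau_k$ is conjugation‑equivariant. On the other side, the techniques of Section~5 (extending Proposition~\ref{AcTor} to all $k$, and valid once $r\gg k$, so that $\dim V^{\perp}$ is large relative to $k$) identify $\tau_k(\Ac(V)\cap J_k)\otimes\QQ$ with $T_k(V^{\perp}):=\tau_k(\phi(J_k(C_0)))\otimes\QQ$, where $C_0\subseteq C$ is the subsurface of Section~5 with one boundary component and $H_1(C_0,\QQ)\xrightarrow{\ \sim\ }V^{\perp}$. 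The module $T_k(V^{\perp})$ is a subfunctor of the degree‑$(k+2)$ functor $W\mapsto \Hom(W,L_{k+1}(W))$ ($L_{k+1}$ the top component of the free nilpotent Lie algebra), so by representation stability for the symplectic groups there is an $N(k)$, depending only on $k$, such that for every symplectic $W$ of large dimension $T_k(W)$ is generated as an $Sp(W,\QQ)$‑module by the classes supported on any fixed subsurface of genus $N(k)$. It therefore suffices to produce inside $M_\Gamma$ enough such classes to generate $T_k(V^{\perp})$.

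These classes come from Lemma~\ref{large}(2). Choose an isolated plane curve singularity $f$ lying outside the finite exceptional list of \cite{CuS} whose Milnor fibre $F$ has genus at least $N(k)$; then $\Gamma_v\subseteq\Mod(F)$ is the stabilizer of the framing induced by the Milnor fibre, and it contains $J_2(F)$ because the crossed homomorphism $\Mod(F)\to H^1(F,\partial F;\ZZ)$ cutting out $\Gamma_v$ restricts on the Torelli group to the Chillingworth homomorphism, which factors through $\tau_1$ and so vanishes on $J_2(F)$. For $r\gg k$ (large enough for this $f$), Lemma~\ref{large}(2) embeds $F$ in $C$ as a Milnor fibre of a singular section of $\LL^{\otimes r}$ with $\phi(\Gamma_v)\subseteq\Gamma$; since $F$ is contained in a ball in $X$, every simple closed curve in $F$ is nullhomotopic in $X$, so $H_1(F,\QQ)$ maps into $\ker i_*\subseteq V^{\perp}$, and after applying an element of $\II(V)$ (Proposition~\ref{indofch}) we may assume $F\subseteq C_0$. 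For $k\ge 2$ we then have $\phi(J_k(F))\subseteq\phi(\Gamma_v)\cap J_k(C)\subseteq\Gamma\cap J_k$, using naturality of the Johnson filtration under $F\hookrightarrow C$, so $\tau_k(\phi(J_k(F)))\otimes\QQ\subseteq M_\Gamma$; since $g(F)\ge N(k)$ this contains the classes supported on a genus‑$N(k)$ subsurface of $F$, and these generate $T_k(V^{\perp})$ as an $Sp(V^{\perp},\QQ)$‑module, so the submodule $M_\Gamma$ contains $T_k(V^{\perp})=\tau_k(\Ac(V)\cap J_k)\otimes\QQ$. For $k=1$ the same argument places in $M_\Gamma$ a nonzero element of $\Lambda^3 V^{\perp}/V^{\perp}$ (the $\tau_1$‑image of $\Gamma_v\cap\II$, namely the kernel of the Chillingworth contraction, nonzero since $g(F)\ge 3$), and since $\Lambda^3 V^{\perp}/V^{\perp}$ is an irreducible $Sp(V^{\perp},\QQ)$‑module containing $\tau_1(\Ac(V)\cap\II)\otimes\QQ$ by Proposition~\ref{AcTor}, again $M_\Gamma$ contains the target. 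In all cases $M_\Gamma=\tau_k(\Ac(V)\cap J_k)\otimes\QQ$, giving the finite index claim.

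The step I expect to be the main obstacle is the structural input of the second paragraph: identifying $\tau_k(\Ac(V)\cap J_k)\otimes\QQ$ with $T_k(V^{\perp})$ and the attendant bounded‑genus generation — that is, showing that the part of $\Ac(V)$ which is genuinely new at level $k$ of the Johnson filtration is, rationally and up to the $Sp(V^{\perp})$‑action, already detected on a subsurface of $C_0$ of genus depending only on $k$. This is exactly where $r\gg k$ is used (the relevant Johnson images must have entered their stable range), and it is also the conceptual reason the theorem is stated with $\Ac(V)$ rather than $\Mod(C)$: classes of $J_k/J_{k+1}$ that are ``spread across'' the subspace $V$ cannot be recovered from boundedly‑supported deep elements together with the symplectic action, and by Theorem~\ref{maincontain} they are not in the image of monodromy at all. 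A secondary point requiring care is arranging the Milnor fibre $F$ to sit inside the model subsurface $C_0$ up to the $\II(V)$‑action, handled by Proposition~\ref{indofch}, together with the separate bookkeeping for $k=1$.
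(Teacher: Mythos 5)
Your proposal is correct and follows essentially the same route as the paper: Lemma~\ref{large}(2) supplies the deep Johnson elements supported on a Milnor fibre inside $C_0$, Lemma~\ref{large}(1) supplies the symplectic action used to spread them out, Proposition~\ref{genbyC0} identifies the target with the image of $J_k(C_0)$, and the $k=1$ case is handled via the Chillingworth kernel and irreducibility of $\Lambda^3 V^{\perp}/V^{\perp}$, exactly as in the paper. The ``representation stability'' input you flag as the main obstacle is precisely the Church--Putman generation theorem \cite{CP} that the paper cites at that step, so no new argument is needed there.
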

\begin{proof}
    The proof will be by the following steps:
    \begin{enumerate}
   
        \item We prove that $\Gamma(X, \LL^{\otimes r})$ contains a framed mapping class group of a large subsurface $C'$ contained in a subsurface of the form $C_0$, where $C_0$ is as in Proposition \ref{genbyC0}.
        \item The image of $\Gamma(X, \LL^{\otimes r})$ in $Sp(V^{\perp})$ is finite index.     Thus  $\Gamma(X, \LL^{\otimes r}) \cap J_k / J_{k+1}$ is preserved under the action of a finite index subgroup of $Sp(V^{\perp})$.
        \item We argue that (1) and (2) imply that $\Gamma(X, \LL^{\otimes r}) \cap J_k / J_{k+1}$ contains a finite index subgroup of the image of $J_k(C_0)$.
        \item We use Proposition\ref{genbyC0} to argue that this then implies the result.
         \end{enumerate}
Part (1) is almost the same as part (2) of  Lemma \ref{large}.  We note that the subsurface $C'$ given to us by Lemma \ref{large} is such that its homology has a basis of vanishing cycles. As a result the map $H^1(X, \ZZ) \to H^1(C', \ZZ)$ is 0. Hence we may extend $C'$ to a maximal $C_0$ such that the pullback $H^1(X,\ZZ) \to H^1(C_0, \ZZ)$ is zero. This establishes (1).
         
To prove (2) note that by Lemma \ref{large}, $\Gamma$ surjects onto a finite index subgroup of  $Sp(V^{\perp})$  say $G$. Hence $\Gamma(X, \LL^{\otimes r}) \cap J_k / J_{k+1}$ is preserved under the action of conjugation by elements in $G$. This establishes (2).

We will establish (3) for $k \ge 2$ and $k =1$ separately. We first deal with the case when $k \ge 2$. When $k \ge 2$, $J_k(C')$ is contained in the framed mapping class group of $C'$.

We note that in the subsurface $C_0$, as long as $g(C') \gg k$ the orbit of the image of $J_k(C')$ under a finite index subgroup of $Sp(V^{\perp})$ is finite index in $J_k(C_0)/ J_{k+1}(C_0).$ This is due to the following reason- by Theorem  of \cite{CP}, the $\Mod(C_0)$ orbits of $J_k(C')$  generates $J_k(C_0)$. This implies that the $Sp(V^{\perp})$ orbits of $J_k(C')/ J_{k} (C') \cap J_{k+1}(C_0)$ generates $J_k(C_0)/ J_{k+1}(C_0)$ and as a result the orbits of $J_k(C')/ J_{k} (C') \cap J_{k+1}(C_0)$ under a finite index subgroup of $Sp(V^{\perp})$  generates a finite index subgroup of  $J_k(C_0)/ J_{k+1}(C_0)$.  This establishes (3) in the case when $k \ge 2$.

When $k =1$, we note that according to \cite{CaS} a framed mapping class group on the surface $C'$ contains the kernel $K(C')$ of the Chillingworth map $\II(C') \to H_1(C')$. This kernel contains an element $\alpha$ such that the composite homomorphism $$ \II(C') \to \Lambda^3 H_1(C', \QQ) /H_1(C, \QQ) \to \Lambda^3 H_1(C) / H_1(C,\QQ)$$ sends $\alpha$ to a nonzero element, here the homomorphism is given by first performing the Johnson homomorphism and composing it with the induced map coming from $H_1(C' \QQ) \to H_1(C,\QQ)$.

The image of such an $\alpha$ in $\Lambda^3 H_1(C) / H_1(C,\QQ)$ is forced to lie in the subgroup $\Lambda^3(V^{\perp})/ V^{\perp}$. However, $$\Lambda^3(V^{\perp})/ V^{\perp} \otimes \QQ$$ is an irreducible representation of $Sp(V^{\perp} \otimes \QQ)$ and as a result, any nontrivial subgroup of $$\Lambda^3(V^{\perp})/ V^{\perp}$$ that is invariant under $Sp(V^{\perp})$ is finite index. As a result $\Gamma \cap J_1(C)$ surjects onto a finite index subgroup of $\Lambda^3(V^{\perp})/ V^{\perp}$. As we have established in Proposition \ref{AcTor} , this implies that $\Gamma \cap J_1(C)/ \Gamma \cap J_2(C)$ is finite index in $\Ac(V) \cap J_1(C)/ \Ac(V) \cap J_2(C).$

Due to (3), $\Gamma(X, \LL^{\otimes r}) \cap J_k / J_{k+1}$ contains a finite index subgroup of the image of $J_k(C_0)$. We then note that $\Ac[V] \cap J_k$ and $J_k(C_0)$ have the same image by Proposition \ref{genbyC0}. This establishes (4).

\end{proof}

We now prove the two corollaries.
\begin{proof}[Proof of Corollary \ref{cor1}]
    This is an immediate consequence of Theorems \ref{maincontain} and \ref{mainsurject}, using the fact that for any lattice $G \subseteq Sp_4(\ZZ)$, $H_1(G, \QQ) =0$ (this is true for any lattice in a higher rank Lie group, by a theorem of Raghunathan \cite{R}). 
\end{proof}

\begin{proof}[Proof of Corollary \ref{cor2}]
    Part (2) is an immediate consequence of the fact that for any lattice $G \subseteq Sp_4(\ZZ)$, $H_1(G, \QQ) =0$ and Theorem \ref{mainsurject}. 

    Let $\gamma$ be a nonseparating simple closed curve such that $[\gamma] \in V^{\perp}$ and $\gamma \not \in \ker i_*$. Then we claim that $T_{\gamma}^k$ is not in $\Stab [i_*]$ and hence not in $\Gamma$ for all $k \in \ZZ \setminus \{0\}$.
    The proof of the claim is as follows, consider some nonseparating simple closed curve $\beta$, intersecting $\gamma$ at one point transversely such that $\beta \in \ker i_*$ (such a $\beta$ always exists by an argument identical to that in Proposition \ref{stabinf}). Then if $T_{\gamma}^k \in \Stab[i_*]$, $i_*(T_{\gamma}^k \beta)$ is conjugate to $i_*(\beta) = 1$. But $i_*(T_{\gamma}^k \beta)$ is conjugate to $i_*(\gamma)^k \neq 1$, since $G$ is torsion-free.

    But the fact that $T_{\gamma}^k$ is never in $\Gamma$ for $k \neq 0$, and $T_{\gamma}^k \in \Ac(V)$ for all $k$ implies that $\Gamma$ is of infinite index in $\Ac(V).$  
\end{proof}

\end{document}